\newtheorem{theorem}{Theorem} [section]
\newtheorem*{remark}{Remark}
\newtheorem*{claim}{Claim}
\newtheorem{lemma}[theorem]{Lemma}
\newtheorem{proposition}[theorem]{Proposition}
\newtheorem{definition}[theorem]{Definition}
\numberwithin{equation}{section}
\begin{document}

\title{A  VARIATIONAL PRINCIPLE FOR NONLINEAR LOCAL PRESSURE }

\author{Jiayi Zhu}
\address{School of Mathematics and Statistics, Nanjing University of Information Science and Technology, Nanjing 210044,  PR China}
\email{zu112302@163.com }

\author{Rui Zou*}
\address{School of Mathematics and Statistics, Nanjing University of Information Science and Technology, Nanjing 210044,  PR China}
\email{zourui@nuist.edu.cn }

\thanks{* Rui Zou  is the corresponding author.  This work is  partially supported by  National Key R\&D Program of China (2022YFA1007800),  NSFC (12471185, 12271386).}

\date{\today}

\begin{abstract}
In this paper, we introduce  a  concept of nonlinear local  topological pressure defined via open covers and establish a corresponding variational principle. Furthermore, we provide multiple equivalent characterizations of  nonlinear pressure using different cover-based approaches.
\end{abstract}

\subjclass[2020]{37A05,37B40,37D35.}
\keywords{Local variational principle, nonlinear topological pressure, nonlinear local pressure.}

\maketitle

\section{Introduction and main results}
\subsection{Nonlinear topological  pressure}
Topological pressure is a fundamental invariant in dynamical systems theory,  serving as a natural generalization of topological entropy. The concept was first introduced  by Ruelle \cite{R1} in the context of expansive systems and later extended by  Walters \cite{W1} to more general settings.  A \emph{topological dynamical system} (TDS for short) is a pair $(X,T)$ consisting of a compact metric space $X$ and  a surjective continuous  map $T:X \rightarrow X $. 
Let $\mathcal{M}(X)$ denote  the set of Borel probability measures on $X$, $\mathcal{M}(X,T)$ be the set of $T\text{-}$invariant Borel probability measures on $X$ and  $\mathcal{M}^e(X,T)$ be the set of ergodic  $T$-invariant measures on $X$. 
 Given a continuous function $f:X\to \mathbb{R}$,   the topological pressure $P(T, f)$  satisfies the following variational principle:
$$P(T, f)=\mathop{\sup}\limits_{\mu\in \mathcal{M}(X,T)}\left\{h_{\mu}(T)+\int_{X} f(x)d\mu (x) \right\},$$
where $h_{\mu}(T)$ is the measure-theoretic entropy of $\mu$. Topological pressure and its variational principle form the foundation of thermodynamic formalism and play a fundamental role in the dimension theory of dynamical systems.

Recently, Buzzi, Kloeckner and Leplaideur \cite{Buzzi}  developed a nonlinear thermodynamic formalism based on the Curie-Weiss mean-field theory \cite{Leplaideur19}. By transforming the statistical mechanics of generalized mean-field models into dynamical systems theory, they established a variational principle for nonlinear topological pressures, provided that the system possesses an abundance of ergodic measures. 
Subsequently, Barreira and Holanda \cite{Barreira2,Barreira1} extended this framework by introducing a higher-dimensional generalization of the nonlinear thermodynamic formalism and its continuous-time counterpart for flows, respectively.  Kucherenko \cite{Kucherenko} established a connection between the nonlinear thermodynamic formalism  and the theory of generalized rotation sets.  Yang, Chen and Zhou  \cite{Yangchenzhou} introduced  the notion of nonlinear weighted topological pressure for factor maps and established an associated variational principle.  Ding and Wang \cite{DW} introduced  the nonlinear topological pressure for   subsets and established corresponding variational principles.  

Now we recall the background  and the main result of  \cite{Buzzi}.  We call a function $\mathcal{E}\colon \mathcal{M}(X)\to\mathbb{R}$ is an \emph{energy} if  it is continuous  in the weak$\text{-}$star topology. For instance, given   continuous functions $f:X\to \mathbb{R}$ and  $F:\mathbb{R}\to \mathbb{R}$,  the function $\mathcal{E}$ defined  by 
$$\mathcal{E}(\mu)=F\left(\int fd \mu\right)$$
is then an energy on $\mathcal{M}(X)$.  For a given TDS $(X,T)$ and an energy $\mathcal{E}\colon \mathcal{M}(X)\to\mathbb{R}$, the {\em nonlinear topological pressure }$P(T,\mathcal{E})$ is defined as 
\[P(T,\mathcal{E})=\lim\limits _{\epsilon\rightarrow0}\limsup \limits _{n\rightarrow \infty}\frac{1}{n}\log P_{n}(T,\mathcal{E},\varepsilon),\]
where 
	$$P_{n}(T,\mathcal{E},\varepsilon)=\sup \left\{\mathop{\sum}\limits _{x\in E} e^{n\mathcal{E}(\triangle_{x}^{n})}:E \;is\; an\;(n,\varepsilon)\text{-} separated\;  set \;of\; X \right\},$$
	and $\triangle_{x}^{n}:=\frac{1}{n}\sum_{i=0}^{n-1}\delta_{T^{i}x}.$  
 Assuming that $(T,\mathcal{E})$ has an abundance of ergodic measures, they proved that  
$$P(T,\mathcal{E})=\mathop{\sup}\limits_{\mu\in \mathcal{M}(X,T)}\left\{h_{\mu}(T)+\mathcal{E}(\mu)\right\}.$$

%

\subsection{Local entropy and pressure} The local theory of entropy and pressure is a foundational framework in dynamical systems, with profound connections to various   concepts including entropy pairs, entropy sets, entropy points, and entropy structures, etc.  Given a TDS $(X,T)$ and an open cover $\mathcal{U}$ of $X,$
Romagnoli \cite{Romagnoli} introduced two types of measure-theoretic entropy relative to  $\mathcal{U}$: $h_{\mu}(T,\mathcal{U})$ and $h^{+}_{\mu}(T,\mathcal{U})$ (see Section 2 for precise  definitions). He established the following  variational principle for local entropy:
$$h_{top}(T,\mathcal{U})=\mathop{\sup}\limits_{\mu \in \mathcal{M}(X,T)}h_{\mu}(T,\mathcal{U}).$$
For invertible systems $(X,T)$, Glasner and Weiss \cite{Glasner05} proved that 
$$h_{top}(T,\mathcal{U})=\mathop{\sup}\limits_{\mu \in \mathcal{M}(X,T)}h_{\mu}^+(T,\mathcal{U}).$$
Further developments in local thermodynamic formalism were made by Huang, Ye, and Zhang \cite{Huangwenthree}, who derived a relative local variational principle for entropy.
  Building on these results Huang and Yi \cite{Huangwen} extended the  variational principle for local entropy to the case of pressure. Specifically, for a continuous potential function $f:X\to \mathbb{R}$, They showed the local pressure $P(T,f;\mathcal{U})$ satisfies
$$P(T, f; \mathcal{U})=\mathop{\sup}\limits_{\mu \in \mathcal{M}(X,T)}\left\{h_{\mu}(T,\mathcal{U})+\int_{X} f(x)d\mu (x)\right\}.$$
 In recent years, Wu \cite{WW}  investigated several notions of local pressure for subsets and measures, defined using the Carathéodory-Pesin construction. Cai \cite{CC} established the  variational principle for weighted local pressure. 
The study of local conditional pressures for  additive or sub-additive potentials was developed  by several authors (see, e.g., \cite{Romagnoli24,SongLi,Zhang09}). In the relative setting, local topological pressures were investigated in \cite{MaChenZhang,MaChen,Danilenko}. The case of amenable group actions   was studied in \cite{LiangYan,DooleyZhang,WangWuXiao}.

\subsection{Main result}
In this paper, we introduce the notion of nonlinear local topological pressure and prove  a corresponding variational principle.

Let $(X,T)$ be a {\rm TDS}. We define a \emph{cover} of $X$ as a finite collection of Borel subsets whose union equals X, and  a \emph{partition} of $X$ as a cover with pairwise disjoint elements.
Denote by $\mathcal{C}_{X}, \mathcal{P}_{X}$ and $\mathcal{C}_{X}^{o}$  the set of  \emph{covers}, \emph{partitions} and \emph{open covers} of $X$, respectively. Given two covers $\mathcal{U}, \mathcal{V} \in \mathcal{C}_{X}$, if each element of $\mathcal{U}$ is contained in some element of $\mathcal{V},$ then we say $\mathcal{U}$ is $finer$ than $\mathcal{V}$ (denoted by $\mathcal{U}\succeq\mathcal{V}$). Denote $\mathcal{U} \vee \mathcal{V}=\{{U}\cap V:U\in \mathcal{U}, V\in \mathcal{V}\}$ and  $\mathcal{U}^{n-1}_{0}=\bigvee ^{n-1}_{i=0}T^{-i}\mathcal{U}$.

 Recall that  $\mathcal{E}\colon \mathcal{M}(X)\to\mathbb{R}$ is called an energy if it is a continuous function on $\mathcal{M}(X)$.
\begin{definition}\label{ddd2.2} Let $(X,T)$ be a {\rm TDS}, $\mathcal{E}\colon \mathcal{M}(X)\to\mathbb{R}$ be an energy  and $\mathcal{U}\in \mathcal{C}_{X}^{o}.$ Let
	\begin{equation}\label{2.1}
		\begin{aligned}
			p_{n}(T,\mathcal{E};\mathcal{U})=\inf \left\{\mathop{\sum}\limits _{V\in \mathcal{V}}\mathop{\sup} \limits _{x\in V}e^{n\mathcal{E}(\triangle_{x}^{n})}:\mathcal{V}\in \mathcal{C}_{X} \; and \; \mathcal{V}\succeq\mathcal{U}_{0}^{n-1}\right\},
		\end{aligned}
	\end{equation}
	where $\triangle_{x}^{n}:=\frac{1}{n}\mathop{\sum}\limits _{i=0}^{n-1}\delta_{T^{i}x}.$ We define the  \emph{nonlinear local pressure of  $(T,\mathcal{E};\mathcal{U})$} by:
	\begin{equation}\label{local pressure}
		P(T, \mathcal{E}; \mathcal{U})=\limsup_{n\rightarrow\infty}\frac{1}{n}\log p_{n}(T,\mathcal{E};\mathcal{U}),
	\end{equation}
	and  define the  \emph{nonlinear lower local pressure of  $(T,\mathcal{E};\mathcal{U})$} by:
	\begin{equation}\label{lower local}
		\underline{P}(T, \mathcal{E}; \mathcal{U})=\liminf_{n\rightarrow\infty}\frac{1}{n}\log p_{n}(T,\mathcal{E};\mathcal{U}).
	\end{equation}
\end{definition}
Note that $\log p_{n}(T,\mathcal{E};\mathcal{U})$ is generally not sub-additive. As a result, the sequence $\{\frac{1}{n}\log p_{n}(T,\mathcal{E};\mathcal{U})\}_{n\in\mathbb{N}}$  may fail to converge. However, under the following abundance of ergodic measures condition,  we show  in  Theorem \ref{T2} that the limit $ \lim\limits_{n\rightarrow\infty}\frac{1}{n}\log p_{n}(T,\mathcal{E};\mathcal{U})$ exists.

\begin{definition}\label{D2.1}
We   say that {\em $(T,\mathcal{E};\mathcal{U})$ has  an abundance of ergodic measures}, if for any $\mu \in \mathcal{M}(X,T)$  and $\varepsilon>0,$ there is an ergodic measure $\nu \in \mathcal{M}^e(X,T)$ such that $h_{\nu}(T,\mathcal{U})+\mathcal{E}(\nu)>h_{\mu}(T,\mathcal{U})+\mathcal{E}(\mu)-\varepsilon.$
    
\end{definition}

Our main conclusions are as follows:

\begin{theorem}\label{T2}{\rm (Local variational principle):}
 Let $(X,T)$ be a {\rm TDS}, $\mathcal{E}\colon \mathcal{M}(X)\to\mathbb{R}$ be an energy and $\mathcal{U}\in \mathcal{C}_{X}^{o}$.  Suppose that $(T,\mathcal{E};\mathcal{U})$ has an abundance of ergodic measures. Then  the nonlinear local pressure $P(T,\mathcal{E};\mathcal{U})$   satisfies 
$$P(T,\mathcal{E};\mathcal{U})=\underline{P}(T, \mathcal{E}; \mathcal{U})=\mathop{\sup}\limits_{\mu \in \mathcal{M}(X,T)}\left\{h_{\mu}(T,\mathcal{U})+\mathcal{E}(\mu)\right\}.$$
\end{theorem}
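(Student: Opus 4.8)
Write $S=\sup_{\mu\in\mathcal{M}(X,T)}\{h_{\mu}(T,\mathcal{U})+\mathcal{E}(\mu)\}$. Since $\underline{P}(T,\mathcal{E};\mathcal{U})\le P(T,\mathcal{E};\mathcal{U})$ always holds, the theorem follows once I establish the two inequalities $S\le\underline{P}(T,\mathcal{E};\mathcal{U})$ and $P(T,\mathcal{E};\mathcal{U})\le S$: these sandwich all three quantities between $S$ and $S$, forcing equality and, in particular, the existence of $\lim_{n}\frac{1}{n}\log p_{n}(T,\mathcal{E};\mathcal{U})$ despite the lack of sub-additivity noted after Definition \ref{ddd2.2}. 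The abundance hypothesis will enter only in the first inequality.

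For the lower bound $S\le\underline{P}$, the plan is first to reduce to ergodic measures: it suffices to bound $\underline{P}(T,\mathcal{E};\mathcal{U})$ below by $h_{\nu}(T,\mathcal{U})+\mathcal{E}(\nu)$ for every $\nu\in\mathcal{M}^{e}(X,T)$ and then invoke abundance to pass to the supremum over all invariant measures. Fix such a $\nu$ and $\epsilon>0$. Since $\mathcal{M}(X)$ is compact and metrizable in the weak-star topology and $\mathcal{E}$ is continuous, Birkhoff's ergodic theorem provides a set $G_{n}$ with $\nu(G_{n})\to1$ on which $\triangle_{x}^{n}$ is so close to $\nu$ that $\mathcal{E}(\triangle_{x}^{n})>\mathcal{E}(\nu)-\epsilon$. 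For an arbitrary $\mathcal{V}\in\mathcal{C}_{X}$ with $\mathcal{V}\succeq\mathcal{U}_{0}^{n-1}$ I would discard the elements missing $G_{n}$, bounding
\[\sum_{V\in\mathcal{V}}\sup_{x\in V}e^{n\mathcal{E}(\triangle_{x}^{n})}\ge e^{n(\mathcal{E}(\nu)-\epsilon)}\,\#\{V\in\mathcal{V}:V\cap G_{n}\neq\emptyset\}.\]
The remaining point is the covering lower bound $\#\{V:V\cap G_{n}\neq\emptyset\}\ge e^{n(h_{\nu}(T,\mathcal{U})-\epsilon)}$: disjointifying the elements meeting $G_{n}$ produces a partition finer than $\mathcal{U}_{0}^{n-1}$, and comparing its entropy with $H_{\nu}(\mathcal{U}_{0}^{n-1})\ge n\,h_{\nu}(T,\mathcal{U})$, while absorbing the small-measure set $X\setminus G_{n}$ into a controlled error, yields the estimate. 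Taking $\tfrac{1}{n}\log$ and letting $n\to\infty$, then $\epsilon\to0$, gives the claim.

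For the upper bound $P\le S$ I must instead exhibit economical covers, and here the nonlinearity of $\mathcal{E}$ is the real difficulty: since $\mathcal{E}$ is not affine, one cannot replace $\sum_{V}\sup_{V}e^{n\mathcal{E}(\triangle_{x}^{n})}$ by an expression in a single limit measure. The plan is to use the uniform continuity of $\mathcal{E}$ on the compact space $\mathcal{M}(X)$ to partition $\mathcal{M}(X)$ into finitely many cells $A_{1},\dots,A_{k}$ on each of which $\mathcal{E}$ oscillates by less than $\epsilon$, with $c_{j}:=\sup_{A_{j}}\mathcal{E}$. Writing $X_{j}^{n}=\{x:\triangle_{x}^{n}\in A_{j}\}$, I would cover each $X_{j}^{n}$ by atoms of $\mathcal{U}_{0}^{n-1}$, assemble these into a single $\mathcal{V}\succeq\mathcal{U}_{0}^{n-1}$, and estimate
\[p_{n}(T,\mathcal{E};\mathcal{U})\le\sum_{j=1}^{k}e^{nc_{j}}N_{j}\le k\max_{1\le j\le k}e^{nc_{j}}N_{j},\qquad N_{j}:=\#\{\text{atoms of }\mathcal{U}_{0}^{n-1}\text{ covering }X_{j}^{n}\}.\]
Since $\tfrac{1}{n}\log k\to0$, matters reduce to the level-set bound $\tfrac{1}{n}\log N_{j}\le\sup\{h_{\mu}(T,\mathcal{U}):\mu\in\overline{A_{j}}\cap\mathcal{M}(X,T)\}+\epsilon$, after which $\tfrac{1}{n}\log N_{j}+c_{j}\le S+O(\epsilon)$ because every $\mu\in\overline{A_{j}}$ satisfies $\mathcal{E}(\mu)\ge c_{j}-\epsilon$.

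The main obstacle is exactly this level-set estimate: controlling the number of atoms of $\mathcal{U}_{0}^{n-1}$ needed to cover the points whose empirical measure lies in a prescribed cell by the largest local entropy $h_{\mu}(T,\mathcal{U})$ among invariant measures near that cell. This is a local, multifractal-type upper bound in which combinatorial covering numbers must be dominated by measure entropies, and I expect to prove it by a Misiurewicz-type construction adapted to $\mathcal{U}_{0}^{n-1}$, exploiting that the defect of invariance of $\triangle_{x}^{n}$ is $O(1/n)$, so that weak-star limits of empirical measures from $X_{j}^{n}$ are invariant and lie in $\overline{A_{j}}$, together with the comparison $H_{\mu}(\mathcal{U}_{0}^{n-1})\approx n\,h_{\mu}(T,\mathcal{U})$. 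The covering lower bound used in the first inequality is a standard but delicate ingredient; it is the coupling of this level-set entropy estimate with the nonlinear energy across the cell decomposition where the genuine work of the proof lies.
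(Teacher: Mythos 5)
Your overall architecture (sandwich $S\le\underline{P}\le P\le S$, with abundance used only to pass from ergodic to invariant measures) matches the paper's, and your idea of linearizing $\mathcal{E}$ by chopping $\mathcal{M}(X)$ into finitely many cells on which it oscillates by less than $\epsilon$ is exactly the device the paper uses in Lemma \ref{L4.5}. But the upper bound as you present it has a genuine gap: the entire difficulty is concentrated in your ``level-set estimate'' $\tfrac1n\log N_j\le\sup\{h_\mu(T,\mathcal{U}):\mu\in\overline{A_j}\cap\mathcal{M}(X,T)\}+\epsilon$, and you assert it rather than prove it. Note that already in the trivial case of a single cell and $\mathcal{E}\equiv0$ this statement \emph{is} Romagnoli's local variational principle $h_{top}(T,\mathcal{U})\le\sup_\mu h_\mu(T,\mathcal{U})$, which is not a routine Misiurewicz computation: because $h_\mu(T,\mathcal{U})$ is defined through $H_\mu(\mathcal{U}_0^{m-1})=\inf_{\beta\succeq\mathcal{U}_0^{m-1}}H_\mu(\beta)$, one must compare a combinatorial covering number with an infimum over all finer partitions, and the known proofs require either the combinatorial lemmas of Huang--Maass--Romagnoli--Ye or (as in this paper) passing to an invertible zero-dimensional extension where $h_\mu(T,\mathcal{U})=h^+_\mu(T,\mathcal{U})=\inf_{l}h_\mu(T,\alpha_l)$ over countably many clopen partitions $\alpha_l\succeq\mathcal{U}$ (Proposition \ref{prop h^+}), and then building sets that are simultaneously separated for all $(\alpha_l)_0^{n-1}$, $l\le n$ (the Claim inside Lemma \ref{L4.5}, at the cost of a harmless factor $\tfrac1{2n}$). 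Your proposal never touches this cover-versus-partition issue, so the key step is missing. The paper also organizes the cell decomposition differently: instead of covering every level set, it selects one dominant cell carrying a $\tfrac1{2n_kN}$-fraction of $p_{n_k}$, weights the chosen points by $e^{n_k\mathcal{E}(\triangle_x^{n_k})}$, and runs the Misiurewicz argument on the resulting measures $\nu_k$; the near-constancy of $\mathcal{E}$ on that cell is what lets it replace $\mathcal{E}(\triangle_x^{n_k})$ by $\mathcal{E}(\mu_\infty)$ in the end. Your per-cell unweighted counting could be made to work, but only after you supply the Romagnoli-type machinery, which is the actual content of Proposition \ref{P4.11}.

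There is also a smaller but real problem in your lower bound. After discarding the elements of $\mathcal{V}$ that miss $G_n$, your entropy comparison needs $H_\nu(\beta)\le\log\#\{V:V\cap G_n\neq\emptyset\}+\nu(X\setminus G_n)\log\#\mathcal{V}+O(1)$ for the disjointification $\beta$ of $\mathcal{V}$, and since $\mathcal{V}$ is an \emph{arbitrary} cover refining $\mathcal{U}_0^{n-1}$, the error term $\epsilon\log\#\mathcal{V}$ is not $O(\epsilon n)$ unless you first reduce to covers with at most $|\mathcal{U}|^n$ elements (which is possible, e.g.\ via Lemma \ref{L4.1} together with merging atoms lying in a common element of $\mathcal{U}_0^{n-1}$, but must be said). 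The paper sidesteps this counting entirely: in Proposition \ref{P4.4} it applies Walters' convexity inequality (Lemma \ref{L4.2}) to the minimizing partition from Lemma \ref{L4.1}, obtaining $\log p_n\ge H_\mu(\beta)+\sum_B\mu(B)\sup_Bn\mathcal{E}(\triangle_x^n)\ge H_\mu(\mathcal{U}_0^{n-1})+n(\mathcal{E}(\mu)-\epsilon)(1-\epsilon)$ in one stroke, with Egorov supplying the set on which $\mathcal{E}(\triangle_x^n)$ is close to $\mathcal{E}(\mu)$. I recommend you adopt that route for inequality \textcircled{1} and, for inequality \textcircled{3}, either prove your level-set estimate in full (including the zero-dimensional reduction) or follow the dominant-cell/weighted-measure argument of Lemma \ref{L4.5} and Proposition \ref{P4.11}.
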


In addition to the variational principle, we also demonstrate how the nonlinear local pressure relates to the global nonlinear topological pressure:
\begin{theorem}\label{T1}
 Let $(X,T)$ be a {\rm TDS}, $\mathcal{E}\colon \mathcal{M}(X)\to\mathbb{R}$ be an energy. Then 
\[P(T,\mathcal{E})=\sup \limits _{\mathcal{U}\in \mathcal{C}_{X}^{o}}P(T,\mathcal{E};\mathcal{U}),\text{~and \quad}	\underline{P}(T,\mathcal{E})=\sup \limits _{\mathcal{U}\in \mathcal{C}_{X}^{o}}\underline{P}(T,\mathcal{E};\mathcal{U}).\]
\end{theorem}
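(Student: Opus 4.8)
The plan is to establish the two displayed identities by proving, for each of the upper and lower versions, the inequalities $\sup_{\mathcal{U}}P(T,\mathcal{E};\mathcal{U})\le P(T,\mathcal{E})$ and $P(T,\mathcal{E})\le\sup_{\mathcal{U}}P(T,\mathcal{E};\mathcal{U})$ (and likewise with $\limsup$ replaced by $\liminf$). Everything reduces to comparing the infimum-over-covers quantity $p_n(T,\mathcal{E};\mathcal{U})$ with the separated-set quantity $P_n(T,\mathcal{E},\varepsilon)$ for a suitably chosen $\varepsilon$. The one technical ingredient that powers both comparisons is a uniform continuity estimate for the energy evaluated along empirical measures, which I would isolate as a preliminary lemma: since $\mathcal{M}(X)$ is compact and $\mathcal{E}$ is continuous, $\mathcal{E}$ is uniformly continuous for a metric $D$ compatible with the weak-$*$ topology, say $D(\mu,\nu)=\sum_k 2^{-k}|\int g_k\,d\mu-\int g_k\,d\nu|$ for a dense sequence $(g_k)$ in the unit ball of $C(X)$. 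Using uniform continuity of finitely many $g_k$, one extracts a modulus $\rho(\varepsilon)\downarrow0$ such that whenever $d(T^ix,T^iy)<\varepsilon$ for all $0\le i<n$ (i.e. $y$ lies in the Bowen ball $B_n(x,\varepsilon)$) one has $|\mathcal{E}(\triangle_x^n)-\mathcal{E}(\triangle_y^n)|\le\rho(\varepsilon)$, \emph{uniformly in} $n$.

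For $\sup_{\mathcal{U}}P(T,\mathcal{E};\mathcal{U})\le P(T,\mathcal{E})$, I would fix an open cover $\mathcal{U}$ with Lebesgue number $\delta$, fix $\varepsilon\le\delta$, and take $E$ to be a maximal $(n,\varepsilon)$-separated set, which is therefore $(n,\varepsilon)$-spanning. The finite Borel cover $\mathcal{V}=\{B_n(x,\varepsilon):x\in E\}$ satisfies $\mathcal{V}\succeq\mathcal{U}_0^{n-1}$, because a ball of radius $\delta$ lies in an element of $\mathcal{U}$, so each $B_n(x,\varepsilon)$ lies in an element of $\mathcal{U}_0^{n-1}$; hence $\mathcal{V}$ is admissible in the infimum defining $p_n$. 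The lemma gives $\sup_{y\in B_n(x,\varepsilon)}e^{n\mathcal{E}(\triangle_y^n)}\le e^{n\rho(\varepsilon)}e^{n\mathcal{E}(\triangle_x^n)}$, whence $p_n(T,\mathcal{E};\mathcal{U})\le e^{n\rho(\varepsilon)}P_n(T,\mathcal{E},\varepsilon)$. Taking $\frac1n\log$, then $\limsup_n$ (resp. $\liminf_n$), then letting $\varepsilon\to0$ so that $\rho(\varepsilon)\to0$ and $\limsup_n\frac1n\log P_n(T,\mathcal{E},\varepsilon)\to P(T,\mathcal{E})$, yields $P(T,\mathcal{E};\mathcal{U})\le P(T,\mathcal{E})$ (resp. the $\underline{P}$ statement); taking the supremum over $\mathcal{U}$ finishes this direction.

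For the reverse inequality, I would fix $\varepsilon>0$ and choose a finite open cover $\mathcal{U}$ all of whose elements have diameter $<\varepsilon$ (e.g. a cover by $\varepsilon/2$-balls). Then every element of $\mathcal{U}_0^{n-1}$, and a fortiori every set $V$ in any cover $\mathcal{V}\succeq\mathcal{U}_0^{n-1}$, has $d_n$-diameter $<\varepsilon$ and hence contains at most one point of any $(n,\varepsilon)$-separated set $E$. Choosing for each $x\in E$ a set $V_x\in\mathcal{V}$ with $x\in V_x$ gives an injection $E\to\mathcal{V}$, so $\sum_{V\in\mathcal{V}}\sup_{y\in V}e^{n\mathcal{E}(\triangle_y^n)}\ge\sum_{x\in E}e^{n\mathcal{E}(\triangle_x^n)}$; as this holds for every admissible $\mathcal{V}$ and every $E$, one obtains $p_n(T,\mathcal{E};\mathcal{U})\ge P_n(T,\mathcal{E},\varepsilon)$. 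Passing to $\frac1n\log$, then $\limsup_n$ (resp. $\liminf_n$), taking the supremum over $\mathcal{U}$, and letting $\varepsilon\to0$ gives $\sup_{\mathcal{U}}P(T,\mathcal{E};\mathcal{U})\ge P(T,\mathcal{E})$ and the analogue for $\underline{P}$.

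The Lebesgue-number containment and the injection combinatorics are routine; the main obstacle is the uniform continuity lemma, and specifically the requirement that the modulus $\rho(\varepsilon)$ be \emph{independent of} $n$. This is precisely where compactness of $\mathcal{M}(X)$ and the averaged structure of the empirical measures $\triangle_x^n$ are needed, and it is exactly this uniformity that allows the factors $e^{n\rho(\varepsilon)}$ to vanish in the $\frac1n\log$ limits once $\varepsilon\to0$, collapsing both comparisons onto $P(T,\mathcal{E})$.
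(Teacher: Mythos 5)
Your proposal is correct, and its second half (choosing $\mathcal{U}$ of diameter $<\varepsilon$ so that each element of any $\mathcal{V}\succeq\mathcal{U}_0^{n-1}$ meets an $(n,\varepsilon)$-separated set in at most one point, giving $p_n(T,\mathcal{E};\mathcal{U})\ge P_n(T,\mathcal{E},\varepsilon)$) is exactly the paper's argument in Lemma~\ref{lem4.6}(ii). The difference lies in the other direction, $P(T,\mathcal{E};\mathcal{U})\le P(T,\mathcal{E})$ for an \emph{arbitrary} open cover $\mathcal{U}$, which is the genuinely delicate inequality since the diameter of $\mathcal{U}$ may be large. The paper handles it (in the proof of Theorem~\ref{Thm4.7}(iv)) by a greedy exhaustion: it selects points $x_1,x_2,\dots$ maximizing $e^{n\mathcal{E}(\triangle_x^n)}$ over successively smaller remainders and builds a disjoint cover $\mathcal{V}=\{U_{i_k}\setminus\bigcup_{j<k}U_{i_j}\}\succeq\mathcal{U}_0^{n-1}$ for which $\sum_{V\in\mathcal{V}}\sup_{x\in V}e^{n\mathcal{E}(\triangle_x^n)}$ \emph{equals} $\sum_{x\in E}e^{n\mathcal{E}(\triangle_x^n)}$ for an $(n,\delta/2)$-separated set $E$, yielding the exact finite-$n$ bound $p_n^1(T,\mathcal{E};\mathcal{U})\le P_n(T,\mathcal{E},\delta/2)$ with no error term. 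You instead take $\mathcal{V}$ to be the Bowen balls $B_n(x,\varepsilon)$ around a maximal $(n,\varepsilon)$-separated set with $\varepsilon$ below the Lebesgue number, and pay the multiplicative factor $e^{n\rho(\varepsilon)}$ coming from the uniform-continuity modulus of $\mathcal{E}$ (the paper's $\tau_\varepsilon$, obtained more directly from the bound $W(\triangle_x^n,\triangle_y^n)\le\frac1n\sum_i d(T^ix,T^iy)$); since $\varepsilon$ can be sent to $0$ with $\mathcal{U}$ fixed, the factor disappears in the $\frac1n\log$ limit. Both routes are valid and both pass identically to the $\liminf$ version; the paper's construction is sharper at finite $n$ and dispenses with the modulus, while yours is shorter and avoids the combinatorial selection. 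Note also that the paper obtains Theorem~\ref{T1} only as the $i=1$ case of the broader Theorems~\ref{Thm4.7} and~\ref{Thm4.8}, which simultaneously identify $P(T,\mathcal{E})$ with the spanning-set quantity $Q(T,\mathcal{E})$ and with the variants $P_2,P_3,P_4$; your argument proves precisely the stated theorem and nothing more, which is all that was asked.
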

Theorem \ref{T1} is  a special case of Theorems \ref{Thm4.7} and \ref{Thm4.8}. In Section 4, we introduce several different definitions of nonlinear local pressure and   reveal their relationship with  nonlinear topological pressure.

The paper is organized as follows.  Section 2 introduces the fundamental definitions and  properties of local entropy and  pressure. In Section 3, we prove Theorem \ref{T2}, while Section 4 is devoted to the proof of Theorems \ref{Thm4.7} and \ref{Thm4.8}, from which Theorem \ref{T1} follows as a direct consequence.

\section{Preliminaries}

\subsection{Wasserstein distance on $\mathcal{M}(X)$ }
We equip the space of probability measures $\mathcal{M}(X)$ with the Wasserstein distance \cite{Villani}. For $\mu_{1}, \mu_{2}\in \mathcal{M}(X)$, the  distance is defined as:
$$W(\mu_{1},\mu_{2})=\sup \{\mu_{1}(f)-\mu_{2}(f):f \text{~ 1-Lipschitz function~}X\rightarrow\mathbb{R} \}.$$
Since  $X$ is compact, the Wasserstein distance induces the weak$\text{-}$star topology on $\mathcal{M}(X),$ and the Wasserstein distance can be bounded by the total variation distance \cite{Buzzi}:
$$W(\mu_{1},\mu_{2})\leq {\rm diam}(X) \|\mu_{1}-\mu_{2}\|_{TV}.$$

%
%
%
%
%
%
%
%

\subsection{Measure$\text{-}$theoretic entropies.}

Let $(X,T)$ be a TDS.
For $\mathcal{U}\in \mathcal{C}_{X}$, denote $H(\mathcal{U})=\log N(\mathcal{U}),$ where $N(\mathcal{U})$ is the minimum among the cardinals of the
sub-covers of $\mathcal{U}$. Then we define 
$$h_{top}(T,\mathcal{U})=\mathop{\lim}\limits_{n\rightarrow\infty}\frac{1}{n}H(\mathcal{U}_{0}^{n-1})=\mathop{\inf}\limits_{n\geq 1}\frac{1}{n}H(\mathcal{U}_{0}^{n-1}),$$ as the \emph{topological entropy of $\mathcal{U}$}.  Define
$$h_{top}(T)=\mathop{\sup}\limits_{\mathcal{U}\in \mathcal{C}_{X}^{o}}h_{top}(T,\mathcal{U})$$ as the \emph{topological entropy of $(X,T).$}

Given a partition $\alpha\in \mathcal{P}_{X}$ and a measure $\mu \in \mathcal{M}(X,T)$,  
 the {\em measure$\text{-}$theoretic entropy of $\mu$ relative to $\alpha$} is defined as : $$h_{\mu}(T,\alpha)=\mathop{\lim}\limits _{n\rightarrow\infty}\frac{1}{n}H_{\mu}(\alpha_{0}^{n-1})=\mathop{\inf}\limits _{n\geq 1}\frac{1}{n}H_{\mu}(\alpha_{0}^{n-1}),$$  
 where $$H_{\mu}(\alpha)=\mathop{\sum}\limits _{A\in \alpha}-\mu (A)\log \mu (A).$$
 the {\em measure$\text{-}$theoretic entropy of $\mu$} is then defined by: $$h_{\mu}(T)=\mathop{\sup}\limits _{\alpha \in \mathcal{P}_{X}}h_{\mu}(T,\alpha).$$ 

For a cover $\mathcal{U}\in \mathcal{C}_{X},$     Romagnoli  \cite{Romagnoli}  introduced the following two   {\em measure-theoretic entropies   relative to $\mathcal{U}$}: $$h_{\mu}(T,\mathcal{U})=\mathop{\lim}\limits _{n\rightarrow \infty}\frac{1}{n}H_{\mu}(\mathcal{U}_{0}^{n-1}) \text{~~and~~} h_{\mu}^{+}(T,\mathcal{U})=\mathop{\inf}\limits _{\alpha \succeq \mathcal{U}, \alpha\in \mathcal{P}_{X}}h_{\mu}(T,\alpha),$$ where $$ H_{\mu}(\mathcal{U})=\mathop{\inf}\limits _{\alpha \succeq \mathcal{U}, \alpha\in \mathcal{P}_{X}}H_{\mu}(\alpha).$$

By \cite[Lemma 8]{Romagnoli}, for  $\mu \in \mathcal{M}(X)\; and\; \mathcal{U} , \mathcal{V} \in \mathcal{C}_{X},$  $ H_{\mu}(\mathcal{U})$ satisfies the following properties:
\begin{enumerate}
\item[(i)]0$\leq H_{\mu}(\mathcal{U})\leq \log N(\mathcal{U}).$ 
\item[(ii)] If $\mathcal{U} \succeq \mathcal{V}$, then $H_{\mu}(\mathcal{U})\geq H_{\mu}(\mathcal{V}).$
\item[(iii)] $H_{\mu}(\mathcal{U}\vee \mathcal{V})\leq H_{\mu}(\mathcal{U})+ H_{\mu}(\mathcal{V}).$
\item[(iv)] $H_{\mu}(T^{-1}\mathcal{U})\leq H_{T_{*}\mu}(\mathcal{U})$, where $T_{*}\mu =\mu \circ T^{-1}$, and, the equality holds when $(X,T)$ is invertible.
\end{enumerate}
 Moreover, if   $\mu \in \mathcal{M}(X,T),$ it's proved  by \cite[Lemma 2.7]{Huangwen}  that
\begin{equation}\nonumber
	\begin{aligned}
		h_{\mu}(T)=\sup _{\mathcal{U} \in \mathcal{C}_{X}^{o}}h_{\mu}(T,\mathcal{U}).
	\end{aligned}
\end{equation} 

Now given  $\mathcal{U}=\{U_{1}, U_{2},..., U_{d}\}\in \mathcal{C}_X^o,$  denote
 $$\mathcal{U}^{*}=\{\alpha \in \mathcal{P}_{X}: \alpha =\{A_{1}, A_{2},...,A_{d}\}, A_{i}\subset U_{i}, i=1,2,...,d\},$$
 where  $A_i$ could be empty for some $i.$   By \cite[Lemma 2]{Huangwentwo} and \cite[Corollary]{Huangwen}, the following properties hold.
 \begin{proposition}\label{prop h^+}
 		Let $(X,T)$ be a {\rm TDS}, $\mu\in \mathcal{M}(X,T)$ and $\mathcal{U}\in \mathcal{C}_X^o$.  The following properties hold:
 		\begin{enumerate}
 			\item $h_{\mu}^{+}(T,\mathcal{U})=\mathop{\inf}\limits_{\alpha\in\mathcal{U}^{*}}h_{\mu}(T,\alpha);$
 			\item If in addition $(X,T)$ is invertible, then  $h_{\mu}(T,\mathcal{U})=h_{\mu}^{+}(T,\mathcal{U}).$
 		\end{enumerate}
 	 
 \end{proposition}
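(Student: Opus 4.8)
The plan is to treat the two items separately: (1) is obtained by an elementary coarsening argument, while (2) combines an easy monotonicity inequality with a Rokhlin-tower coding construction, the latter being where invertibility—and the main difficulty—enters.

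For (1), first observe that $\mathcal{U}^{*}\subseteq\{\alpha\in\mathcal{P}_{X}:\alpha\succeq\mathcal{U}\}$, since every $\alpha=\{A_{1},\dots,A_{d}\}\in\mathcal{U}^{*}$ has $A_{i}\subseteq U_{i}$ and hence refines $\mathcal{U}$; this already gives $h_{\mu}^{+}(T,\mathcal{U})\le\inf_{\alpha\in\mathcal{U}^{*}}h_{\mu}(T,\alpha)$. For the reverse inequality I would show that every partition $\beta=\{B_{1},\dots,B_{k}\}$ with $\beta\succeq\mathcal{U}$ can be coarsened into a member of $\mathcal{U}^{*}$: choosing for each $j$ an index $c(j)$ with $B_{j}\subseteq U_{c(j)}$ and setting $A_{i}=\bigcup_{c(j)=i}B_{j}$ produces $\alpha=\{A_{1},\dots,A_{d}\}\in\mathcal{U}^{*}$ with $\beta\succeq\alpha$. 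Monotonicity of measure-theoretic entropy under refinement then gives $h_{\mu}(T,\beta)\ge h_{\mu}(T,\alpha)\ge\inf_{\alpha'\in\mathcal{U}^{*}}h_{\mu}(T,\alpha')$, and taking the infimum over such $\beta$ yields $h_{\mu}^{+}(T,\mathcal{U})\ge\inf_{\alpha\in\mathcal{U}^{*}}h_{\mu}(T,\alpha)$, proving (1).

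For (2), the inequality $h_{\mu}(T,\mathcal{U})\le h_{\mu}^{+}(T,\mathcal{U})$ holds for any (not necessarily invertible) system: if $\alpha\succeq\mathcal{U}$ then $\alpha_{0}^{n-1}\succeq\mathcal{U}_{0}^{n-1}$, so by the definition of $H_{\mu}(\mathcal{U}_{0}^{n-1})$ as an infimum we get $H_{\mu}(\mathcal{U}_{0}^{n-1})\le H_{\mu}(\alpha_{0}^{n-1})$; dividing by $n$ and letting $n\to\infty$ gives $h_{\mu}(T,\mathcal{U})\le h_{\mu}(T,\alpha)$, and the infimum over $\alpha\succeq\mathcal{U}$ gives the claim. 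The real content is the opposite inequality $h_{\mu}^{+}(T,\mathcal{U})\le h_{\mu}(T,\mathcal{U})$, for which I would use properties (iii)--(iv) together with the invariance of $\mu$ to note that $\{H_{\mu}(\mathcal{U}_{0}^{n-1})\}_{n}$ is subadditive, so that $h_{\mu}(T,\mathcal{U})=\inf_{n}\tfrac1n H_{\mu}(\mathcal{U}_{0}^{n-1})$; it therefore suffices to fix $n$ and exhibit, for every $\varepsilon>0$, a partition $\alpha\in\mathcal{U}^{*}$ with $h_{\mu}(T,\alpha)\le\tfrac1n H_{\mu}(\mathcal{U}_{0}^{n-1})+\varepsilon$.

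The construction of this $\alpha$ is the main obstacle. I would first pick a near-optimal $P\in(\mathcal{U}_{0}^{n-1})^{*}$ (possible by the same coarsening as in (1), applied to the cover $\mathcal{U}_{0}^{n-1}$), whose atoms $A_{s}$ are indexed by words $s\in\{1,\dots,d\}^{n}$ with $A_{s}\subseteq\bigcap_{i=0}^{n-1}T^{-i}U_{s_{i}}$ and $H_{\mu}(P)\le H_{\mu}(\mathcal{U}_{0}^{n-1})+\varepsilon$. Assuming $\mu$ ergodic and aperiodic—the general case reducing to this via the ergodic decomposition and the affinity of the quantities involved—Rokhlin's lemma, which requires invertibility, provides a tower $B,TB,\dots,T^{K-1}B$ of large height $K$ and measure $>1-\delta$. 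Partitioning each column into blocks of length $n$ and, on the level $T^{\ell n+i}$ of a column whose block base lies in $A_{s}$, assigning the single $\mathcal{U}$-label $s_{i}$ (a legitimate choice since $T^{i}$ of the base point lies in $U_{s_{i}}$) defines a partition $\alpha\in\mathcal{U}^{*}$ whose long $T$-names are read off from one $P$-symbol per $n$ coordinates. Counting names then yields $H_{\mu}(\alpha_{0}^{m-1})\le\tfrac{m}{n}H_{\mu}(P)+m\cdot r(\delta,K,\varepsilon)$ with an error $r$ that vanishes as $\delta\to0$ and $K\to\infty$; dividing by $m$, letting $m\to\infty$ and then $\delta\to0$, $K\to\infty$, $\varepsilon\to0$ gives the desired bound. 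The delicate points I expect are the bookkeeping of the error term $r$ (contributions from the top $n-1$ levels of each block, from the complement of the tower, and from the base-coding ambiguity) and the reduction from the ergodic aperiodic case to a general invariant $\mu$.
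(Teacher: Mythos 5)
The paper gives no proof of this proposition: it is imported wholesale from \cite[Lemma 2]{Huangwentwo} and \cite[Corollary]{Huangwen}, so your attempt is necessarily being measured against the published proofs rather than anything in the text. Your part (1) (coarsening a partition refining $\mathcal{U}$ into a member of $\mathcal{U}^{*}$) and the easy inequality $h_{\mu}(T,\mathcal{U})\le h_{\mu}^{+}(T,\mathcal{U})$ are correct and standard. The Rokhlin-tower coding strategy for the hard inequality is also the right general idea, but as written it has two genuine gaps.

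The first and most serious is the central estimate $H_{\mu}(\alpha_{0}^{m-1})\le\frac{m}{n}H_{\mu}(P)+m\,r(\delta,K,\varepsilon)$. Your partition $\alpha$ reads a $P$-symbol only at the block-base set $E=\bigcup_{\ell}T^{\ell n}B$, which has measure roughly $1/n$, and what any counting or conditional-entropy argument actually controls is $\mu(E)\,H_{\mu_{E}}(P)$ (entropy of $P$ with respect to the conditional measure on $E$), or its dynamical refinement $\mu(E)h_{\mu_E}(T_E,P|_E)$ — not $\frac{1}{n}H_{\mu}(P)$. The only general inequality available, $\mu(E)H_{\mu_{E}}(P)\le H_{\mu}(P)$, loses exactly the factor $1/n$ you need: if the mass of the nontrivial atoms of $P$ happens to sit on $E$, then $\mu(E)H_{\mu_{E}}(P)$ is comparable to $H_{\mu}(P)\approx H_{\mu}(\mathcal{U}_{0}^{n-1})$, and the resulting bound $h_{\mu}(T,\alpha)\lesssim H_{\mu}(\mathcal{U}_{0}^{n-1})$ is off by a factor of $n$ and proves nothing. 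A crude cardinality count is even worse, giving $\frac{1}{n}\log|P|\le\log d$. The standard repair is to exploit the freedom in the block phase: the sets $E_{p}=\bigcup_{\ell}T^{\ell n+p}B$, $0\le p<n$, are pairwise disjoint, so $\sum_{p=0}^{n-1}\mu(E_{p})H_{\mu_{E_{p}}}(P)\le H_{\mu}(P)$ and some phase satisfies $\mu(E_{p})H_{\mu_{E_{p}}}(P)\le\frac{1}{n}H_{\mu}(P)$; building $\alpha$ with that phase recovers the missing $\frac{1}{n}$. Without this (or an equivalent device, such as choosing $P$ adapted to the conditional measures on the tower) the proof does not close, and this is the crux rather than a bookkeeping detail.

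The second gap is the reduction of a general invariant $\mu$ to the ergodic aperiodic case ``via the ergodic decomposition and the affinity of the quantities involved.'' The function $\mu\mapsto h_{\mu}^{+}(T,\mathcal{U})=\inf_{\alpha\in\mathcal{U}^{*}}h_{\mu}(T,\alpha)$ is an infimum of affine functions, hence concave, and concavity yields $h_{\mu}^{+}\ge\int h_{\mu_{x}}^{+}\,d\mu(x)$ for free — the opposite of the inequality $h_{\mu}^{+}\le\int h_{\mu_{x}}^{+}\,d\mu(x)$ that your reduction requires. Establishing that direction means producing a single $\alpha\in\mathcal{U}^{*}$ that is simultaneously near-optimal for almost every ergodic component, which takes a measurable selection of partitions over the decomposition and a gluing argument on the disjoint invariant supports; it is a substantive step in \cite{Huangwentwo}, not a consequence of affinity.
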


\subsection{Some properties of nonlinear local pressures}
Let $(X,T)$ be a {\rm TDS}, $\mathcal{E}\colon \mathcal{M}(X)\to\mathbb{R}$ be an energy and $\mathcal{U}\in \mathcal{C}_{X}^{o}$.   The nonlinear local pressure 	$P(T, \mathcal{E}; \mathcal{U})=\limsup_{n\rightarrow\infty}\frac{1}{n}\log p_{n}(T,\mathcal{E};\mathcal{U})$  is defined as in Definition \ref{ddd2.2}. We first  provide an equivalent statement of $p_{n}(T,\mathcal{E};\mathcal{U})$.

 For $\mathcal{V}\in\mathcal{C}_{X}$, let $\alpha$ be the Borel partition generated by $\mathcal{V}.$ Define $$\mathcal{P}^{*}(\mathcal{V})=\{\beta \in \mathcal{P}_{X}:\beta\succeq\mathcal{V} \text{ and each atom  of } \beta \text{ is the union  of  some  atoms of  }\alpha\}.$$

\begin{lemma}\label{L4.1}
	$\mathcal{P}^{*}(\mathcal{V})$ is a finite set, and for each $n \in\mathbb{N}$,
	$$\mathop {\inf}\limits_ {\beta \in \mathcal{C}_{X}, \beta\succeq\mathcal{V}} \mathop{\sum}\limits_ {B\in \beta} \mathop{\sup}\limits_ {x\in B} e^{n \mathcal{E}(\triangle_{x}^{n})}=\min\left\{\mathop{\sum}\limits_ {B\in \beta} \mathop{\sup}\limits_ {x\in B} e^{n \mathcal{E}(\triangle_{x}^{n})}:\beta \in P^{*}(\mathcal{V})\right\}.$$ 
	In particular, by taking $\mathcal{V}=\mathcal{U}_{0}^{n-1}$, we have  
	$$p_{n}(T,\mathcal{E};\mathcal{U})=\min \left\{\mathop{\sum}\limits_ {B\in \beta}\mathop{\sup}\limits_ {x\in B} e^{n \mathcal{E}(\triangle_{x}^{n})}:\beta \in \mathcal{P}^{*}(\mathcal{U}_{0}^{n-1}) \right\}.$$
\end{lemma}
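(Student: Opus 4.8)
The plan is to prove the two assertions separately: first the finiteness of $\mathcal{P}^{*}(\mathcal{V})$, and then the optimization identity, the latter by comparing an arbitrary admissible cover with a well-chosen member of $\mathcal{P}^{*}(\mathcal{V})$.

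For finiteness, write $\mathcal{V}=\{V_{1},\dots,V_{m}\}$. The generated partition $\alpha$ has at most $2^{m}$ atoms, hence is finite. Every $\beta\in\mathcal{P}^{*}(\mathcal{V})$ has atoms that are unions of $\alpha$-atoms, so $\beta$ is completely determined by the induced partition of the finite set of $\alpha$-atoms. Since a finite set admits only finitely many partitions, $\mathcal{P}^{*}(\mathcal{V})$ is finite; in particular the infimum on the right-hand side is attained and equals a minimum.

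For the identity, abbreviate $g(x)=e^{n\mathcal{E}(\triangle_{x}^{n})}$, which is continuous and strictly positive since $\mathcal{E}$ is weak-$*$ continuous and $x\mapsto\triangle_{x}^{n}$ is continuous, and write $\Phi(\beta)=\sum_{B\in\beta}\sup_{x\in B}g(x)$. Because each $\beta\in\mathcal{P}^{*}(\mathcal{V})$ is a partition finer than $\mathcal{V}$, hence an admissible cover, the inequality $\inf_{\beta\succeq\mathcal{V}}\Phi(\beta)\leq\min_{\beta\in\mathcal{P}^{*}(\mathcal{V})}\Phi(\beta)$ is immediate. The substance is the reverse direction: given an arbitrary cover $\beta=\{B_{1},\dots,B_{k}\}\succeq\mathcal{V}$, I would produce some $\beta^{*}\in\mathcal{P}^{*}(\mathcal{V})$ with $\Phi(\beta^{*})\leq\Phi(\beta)$. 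The key structural observation is that each nonempty atom $A$ of $\alpha$ lies entirely on one side of every $V_{i}$, so $A\cap V_{i}\neq\emptyset$ forces $A\subseteq V_{i}$. Fixing $\epsilon>0$, for each atom $A$ I pick $x_{A}\in A$ with $g(x_{A})>\sup_{A}g-\epsilon$, then choose an index $t$ with $x_{A}\in B_{t}$ (possible since $\beta$ covers $X$) and set $\phi(A)=i(t)$, where $B_{t}\subseteq V_{i(t)}$; by the observation $A\subseteq V_{\phi(A)}$. Grouping the atoms according to $\phi$ yields a partition $\beta^{*}_{\epsilon}\in\mathcal{P}^{*}(\mathcal{V})$. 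On each group the supremum of $g$ is the maximum of $\sup_{A}g$ over its atoms; selecting for each group a maximizing atom, the associated indices $t$ are pairwise distinct (they map under $i(\cdot)$ to the distinct group labels), so the corresponding terms form a sub-sum of $\Phi(\beta)$, and positivity of $g$ together with $g(x_{A})>\sup_{A}g-\epsilon$ gives $\Phi(\beta^{*}_{\epsilon})\leq\Phi(\beta)+m\epsilon$.

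To conclude, since $\mathcal{P}^{*}(\mathcal{V})$ is finite, some fixed $\beta^{*}$ occurs as $\beta^{*}_{\epsilon}$ along a sequence $\epsilon\to 0$, whence $\Phi(\beta^{*})\leq\Phi(\beta)$; taking the infimum over $\beta$ and combining with the trivial direction yields the claimed equality, with the right-hand side attained. The step I expect to be the main obstacle is precisely the possible non-attainment of $\sup_{A}g$ on the Borel (non-closed) atoms $A$: this forces the assignment $\phi$ to be built from near-maximizing points and then stabilized via the finiteness of $\mathcal{P}^{*}(\mathcal{V})$, rather than through a single clean choice. A secondary point requiring care is the verification that the selected indices $t$ are genuinely distinct, so that the governing terms really do form a sub-sum of $\Phi(\beta)$.
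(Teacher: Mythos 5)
Your proof is correct, and it is essentially the argument the paper relies on: the paper simply cites \cite[Lemma 2.1]{Huangwen} with $f_n(x)$ replaced by $n\mathcal{E}(\triangle_{x}^{n})$, and your construction --- assigning each atom of the generated partition $\alpha$ to a cover element via a near-maximizing point and then stabilizing over $\epsilon$ using the finiteness of $\mathcal{P}^{*}(\mathcal{V})$ --- is a faithful self-contained version of that proof. The two points you flag (possible non-attainment of $\sup_{A}g$ on Borel atoms, and the injectivity of the selected indices $t$ so that the comparison really is with a sub-sum of $\Phi(\beta)$, which uses $g>0$) are exactly the delicate steps, and you handle both correctly.
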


\begin{proof}
	The proof of \cite[Lemma 2.1]{Huangwen}  remains valid for  this lemma  upon replacing $f_n(x)$ by $n \mathcal{E}(\triangle_{x}^{n}).$  We therefore omit the details of the proof.
\end{proof}

Let  $(Y,S)$ and $(X,T)$ be two TDS.  We say $\pi:Y\rightarrow X$   is   a {\em factor map}   from $(Y,S)$ to $(X,T)$ if   $\pi$ is a surjective and continuous map   satisfying $\pi\circ S=T\circ\pi.$  In this case, $(Y,S)$ is called an \emph{extension} of $(X,T)$, and  $(X,T)$ is called a \emph{factor} of $(Y,S).$ In addition, if $\pi $ is also   injective then it's called   an \emph{isomorphism}.  A map  $\pi:Y\rightarrow X$ induces a map $\pi_*:\mathcal{M}(Y)\rightarrow \mathcal{M}(X)$  defined by $\pi_*\mu=\mu\circ\pi^{-1}$.

\begin{lemma}[\cite{Romagnoli}, Proposition 6]\label{lem 2.5}
	Let  $(Y,S)$ and $(X,T)$ be two {\rm TDS}.	If $\pi :(Y,S)\rightarrow (X,T)$ is a factor map, then for any $\nu \in \mathcal{M}_S(Y)$ and $\mathcal{U}\in \mathcal{C}_{X}^{o}$, we have $h_{\nu}(S,\pi ^{-1}\mathcal{U})=h_{\pi_* \nu }(T,\mathcal{U}).$      
\end{lemma}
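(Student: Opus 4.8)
The plan is to reduce the claimed identity at the level of entropy rates to a single cover-level identity,
$$H_{\nu}(\pi^{-1}\mathcal{W})=H_{\pi_*\nu}(\mathcal{W})\qquad\text{for every }\mathcal{W}\in\mathcal{C}_{X},$$
and then to apply it with $\mathcal{W}=\mathcal{U}_{0}^{n-1}$. First I would record the commutation $S^{-i}\pi^{-1}\mathcal{U}=\pi^{-1}T^{-i}\mathcal{U}$, which follows from $\pi\circ S=T\circ\pi$, and use that preimages commute with joins to obtain $(\pi^{-1}\mathcal{U})_{0}^{n-1}=\pi^{-1}(\mathcal{U}_{0}^{n-1})$. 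Granting the cover-level identity, dividing by $n$ and letting $n\to\infty$ then gives $h_{\nu}(S,\pi^{-1}\mathcal{U})=\lim_{n}\frac1n H_{\nu}(\pi^{-1}(\mathcal{U}_{0}^{n-1}))=\lim_{n}\frac1n H_{\pi_*\nu}(\mathcal{U}_{0}^{n-1})=h_{\pi_*\nu}(T,\mathcal{U})$, which is the assertion.

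For the cover-level identity, the easy inequality $H_{\nu}(\pi^{-1}\mathcal{W})\le H_{\pi_*\nu}(\mathcal{W})$ comes from lifting: for every $\alpha\in\mathcal{P}_{X}$ with $\alpha\succeq\mathcal{W}$, the partition $\pi^{-1}\alpha\in\mathcal{P}_{Y}$ satisfies $\pi^{-1}\alpha\succeq\pi^{-1}\mathcal{W}$, and since $\nu(\pi^{-1}A)=\pi_*\nu(A)$ we have $H_{\nu}(\pi^{-1}\alpha)=H_{\pi_*\nu}(\alpha)$; taking the infimum over such $\alpha$ yields the bound. Here I use that the infimum defining $H_{\pi_*\nu}(\mathcal{W})$ may be restricted to partitions subordinate to $\mathcal{W}$ with one atom in each element of $\mathcal{W}$ (the analogue of $\mathcal{U}^{*}$), obtained by coarsening a finer partition, so that the lifted partitions suffice.

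The harder inequality $H_{\nu}(\pi^{-1}\mathcal{W})\ge H_{\pi_*\nu}(\mathcal{W})$ is the crux, because a partition of $Y$ subordinate to $\pi^{-1}\mathcal{W}$ need not be $\pi$-measurable and hence need not descend to $X$. Writing $\mathcal{W}=\{W_{1},\dots,W_{m}\}$, I would first replace any $\gamma\succeq\pi^{-1}\mathcal{W}$, without increasing entropy, by its index-coarsening $\{D_{1},\dots,D_{m}\}$ with $D_{j}\subseteq\pi^{-1}W_{j}$. I would then disintegrate $\nu=\int_{X}\nu_{x}\,d(\pi_*\nu)(x)$ over the factor and set $p_{j}(x)=\nu_{x}(D_{j})$, so that $p_{j}(x)>0$ forces $x\in W_{j}$ and $\nu(D_{j})=\int_{X}p_{j}\,d(\pi_*\nu)$. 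Thus $H_{\nu}(\gamma)=H(\bar{p})$, where $\bar{p}=(\nu(D_{1}),\dots,\nu(D_{m}))$ is a $(\pi_*\nu)$-average of the fibrewise distributions $p(\cdot)$.

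The key observation is then that $\bar{p}$ lies in the convex hull of the distributions $(\pi_*\nu(A_{1}),\dots,\pi_*\nu(A_{m}))$ arising from genuine partitions $\alpha=\{A_{j}\}$ of $X$ subordinate to $\mathcal{W}$ (the ``integral'' assignments $x\mapsto j(x)$ with $x\in W_{j(x)}$): on the atomic part of $\pi_*\nu$ this is an elementary Minkowski-sum computation, and on the non-atomic part it follows from a Lyapunov-type convexity of the range. Since $H$ is concave, this gives $H_{\nu}(\gamma)=H(\bar{p})\ge\inf_{\alpha\succeq\mathcal{W}}H_{\pi_*\nu}(\alpha)=H_{\pi_*\nu}(\mathcal{W})$, and the infimum over $\gamma$ completes the argument. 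The main obstacle is precisely this convexity step, namely certifying that a fibrewise-fractional assignment can be matched in entropy by an honest $X$-partition subordinate to $\mathcal{W}$; disintegration together with the concavity of $H$ is what makes it go through.
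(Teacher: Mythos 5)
Your argument is correct, but note that the paper does not prove this statement at all: it is imported verbatim as \cite[Proposition 6]{Romagnoli}, so there is no in-paper proof to match. Your reduction to the single-cover identity $H_{\nu}(\pi^{-1}\mathcal{W})=H_{\pi_*\nu}(\mathcal{W})$ via $(\pi^{-1}\mathcal{U})_0^{n-1}=\pi^{-1}(\mathcal{U}_0^{n-1})$ is the standard first step, and the easy inequality by lifting partitions needs no restriction to $\mathcal{U}^{*}$-type partitions (the lifted family is simply a subfamily of the partitions refining $\pi^{-1}\mathcal{W}$, so the infimum over it can only be larger). For the hard inequality, your route --- index-coarsen $\gamma$ to $\{D_j\}$ with $D_j\subseteq\pi^{-1}W_j$, disintegrate $\nu$ over $\pi$, and purify the fibrewise-fractional assignment $p_j(x)=\nu_x(D_j)$ into honest partitions of $X$ subordinate to $\mathcal{W}$ using Lyapunov/Dvoretzky--Wald--Wolfowitz convexity on the non-atomic part and elementary convex combinations on the atoms, then conclude by concavity of $H$ --- is a legitimate and essentially complete proof, in the same spirit as Romagnoli's original disintegration argument. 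Two small points deserve a sentence in a written-up version: measurability of $x\mapsto\nu_x(D_j)$ is supplied by the disintegration theorem, and when $\pi_*\nu$ has countably many atoms the ``Minkowski-sum'' step should be carried out for finitely many atoms carrying mass at least $1-\epsilon$ and closed with the uniform continuity of $H$ on the simplex. Neither affects the validity of the approach.
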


\begin{lemma}\label{lem 2.4}
	Let $\pi :(Y,S)\rightarrow (X,T)$ ba a factor map,  $\mathcal{E}\colon \mathcal{M}(X)\to\mathbb{R}$ be an energy and $\mathcal{U}\in \mathcal{C}_{X}^{o}$.  Then $P(S,\mathcal{E}\circ \pi_* ;\pi ^{-1}\mathcal{U})=P(T,\mathcal{E};\mathcal{U}).$
\end{lemma}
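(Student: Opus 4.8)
The plan is to prove the stronger pointwise identity $p_{n}(S,\mathcal{E}\circ\pi_* ;\pi ^{-1}\mathcal{U})=p_{n}(T,\mathcal{E};\mathcal{U})$ for every $n\in\mathbb{N}$; applying $\frac1n\log(\cdot)$ and then $\limsup_{n\to\infty}$ yields the claimed equality of pressures at once. Two elementary identities drive everything. First, from $\pi\circ S=T\circ\pi$ one gets $\pi\circ S^{i}=T^{i}\circ\pi$, hence $S^{-i}\pi^{-1}=\pi^{-1}T^{-i}$ for all $i$, so that
$$(\pi^{-1}\mathcal{U})_{0}^{n-1}=\bigvee_{i=0}^{n-1}S^{-i}\pi^{-1}\mathcal{U}=\pi^{-1}\Big(\bigvee_{i=0}^{n-1}T^{-i}\mathcal{U}\Big)=\pi^{-1}(\mathcal{U}_{0}^{n-1}).$$
Second, for $y\in Y$ the empirical measure satisfies $\pi_*\big(\frac1n\sum_{i=0}^{n-1}\delta_{S^{i}y}\big)=\frac1n\sum_{i=0}^{n-1}\delta_{T^{i}\pi y}=\triangle_{\pi y}^{n}$, so the energy $\mathcal{E}\circ\pi_*$ along the $S$-orbit of $y$ equals $\mathcal{E}(\triangle_{\pi y}^{n})$.

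Next I would invoke Lemma \ref{L4.1} to replace both infima by minima over the finite partition families $\mathcal{P}^{*}(\mathcal{U}_{0}^{n-1})$ (on $X$) and $\mathcal{P}^{*}(\pi^{-1}(\mathcal{U}_{0}^{n-1}))$ (on $Y$, using the first identity above). The heart of the argument is to set up the map $\Phi(\beta)=\pi^{-1}\beta:=\{\pi^{-1}D:D\in\beta\}$ and show it is a bijection between these two families that preserves the objective value atom by atom. For the objective value, if $C=\pi^{-1}D$ then, since $\pi$ is surjective, $\pi(\pi^{-1}D)=D$, so by the empirical-measure identity $\sup_{y\in C}e^{n(\mathcal{E}\circ\pi_*)(\frac1n\sum_{i=0}^{n-1}\delta_{S^{i}y})}=\sup_{y\in\pi^{-1}D}e^{n\mathcal{E}(\triangle_{\pi y}^{n})}=\sup_{x\in D}e^{n\mathcal{E}(\triangle_{x}^{n})}$; summing over atoms shows the two objective sums coincide under $\Phi$.

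The crux, and the step requiring the most care, is verifying that $\Phi$ is a well-defined bijection. The key structural fact is that the atoms of the Borel partition generated by $\pi^{-1}(\mathcal{U}_{0}^{n-1})$ are exactly the preimages $\pi^{-1}(a)$ of the atoms $a$ of the Borel partition generated by $\mathcal{U}_{0}^{n-1}$: this holds because preimage commutes with complement, $Y\setminus\pi^{-1}V=\pi^{-1}(X\setminus V)$, so each ``Venn'' intersection pulls back to the corresponding one; together with surjectivity (whence $\pi^{-1}a=\emptyset$ iff $a=\emptyset$) this matches the nonempty generating atoms on the two sides bijectively. Using this I would check: (i) for $\beta\in\mathcal{P}^{*}(\mathcal{U}_{0}^{n-1})$ the collection $\pi^{-1}\beta$ is a partition of $Y$ whose atoms are unions of generating atoms and which refines $\pi^{-1}(\mathcal{U}_{0}^{n-1})$ (the refinement $\beta\succeq\mathcal{U}_{0}^{n-1}$ pulls back directly, since $D\subseteq V$ implies $\pi^{-1}D\subseteq\pi^{-1}V$), so $\Phi(\beta)\in\mathcal{P}^{*}(\pi^{-1}(\mathcal{U}_{0}^{n-1}))$; and (ii) conversely every $\gamma$ in the target descends, each atom of $\gamma$ being a union of atoms $\pi^{-1}(a_{k})$ and hence equal to $\pi^{-1}(D)$ with $D=\bigcup_{k}a_{k}$, where surjectivity forces the resulting sets $D$ to be pairwise disjoint and to cover $X$, and $\pi^{-1}D\subseteq\pi^{-1}V$ gives $D\subseteq V$, so $\beta=\{D\}\in\mathcal{P}^{*}(\mathcal{U}_{0}^{n-1})$ with $\Phi(\beta)=\gamma$.

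Injectivity of $\Phi$ is immediate from surjectivity of $\pi$ (if $\pi^{-1}\beta=\pi^{-1}\beta'$ then $\beta=\beta'$). Once this value-preserving bijection is in hand, the two minima are equal, giving the per-$n$ identity and hence the lemma. The only subtlety to handle cleanly is the bookkeeping of empty atoms, which is harmless precisely because surjectivity guarantees $\pi^{-1}A\neq\emptyset$ exactly when $A\neq\emptyset$.
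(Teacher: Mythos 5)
Your proposal is correct and follows essentially the same route as the paper: both establish the stronger per-$n$ identity $p_{n}(S,\mathcal{E}\circ\pi_*;\pi^{-1}\mathcal{U})=p_{n}(T,\mathcal{E};\mathcal{U})$ by transporting competitors along $\pi$, using the intertwining identities $(\pi^{-1}\mathcal{U})_{0}^{n-1}=\pi^{-1}(\mathcal{U}_{0}^{n-1})$ and $\pi_*\triangle_{y}^{n}=\triangle_{\pi y}^{n}$ together with Lemma \ref{L4.1}. The only (harmless) organizational difference is that you build a single value-preserving bijection $\beta\mapsto\pi^{-1}\beta$ between the two families $\mathcal{P}^{*}$, whereas the paper proves one inequality by pulling back arbitrary covers $\mathcal{V}\succeq\mathcal{U}_{0}^{n-1}$ and the other by pushing forward a minimizing partition via $\pi$.
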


\begin{proof}
	Fix $n\in \mathbb{N}$ and let $\mathcal{V} \in \mathcal{C}_{X}$ satisfy $\mathcal{V}\succeq \mathcal{U}_{0}^{n-1}.$ Then  $\pi ^{-1}\mathcal{V}\in \mathcal{C}_{Y}$ and $\pi ^{-1}\mathcal{V} \succeq(\pi ^{-1}\mathcal{U})_{0}^{n-1}.$   It follows from $\pi\circ S=T\circ\pi$ that
	\begin{align*}
		\mathop{\sum}\limits _{V\in \mathcal{V}}\sup_{x\in V}e^{n\mathcal{E}(\triangle_{x}^{n})}=\mathop{\sum}\limits _{V\in \mathcal{V}}\sup_{y\in \pi ^{-1}(V)}e^{n\mathcal{E}(\triangle_{\pi y}^{n})}
		&=\mathop{\sum}\limits _{V\in \mathcal{V}}\sup_{y\in \pi ^{-1}(V)}e^{n\mathcal{E}\circ \pi_*(\triangle_{y}^{n})}\\
		&\geq p_{n}(S,\mathcal{E}\circ \pi_* ; \pi ^{-1}\mathcal{U}).
	\end{align*}
	Since $\mathcal{V}$ is arbitrary, 
	$$p_{n}(T,\mathcal{E};\mathcal{U})\geq p_{n}(S,\mathcal{E}\circ \pi_*; \pi ^{-1}\mathcal{U}).$$
	
	On the other hand,  by Lemma \ref{L4.1},
	$$p_{n}(S,\mathcal{E}\circ \pi_*; \pi ^{-1}\mathcal{U})=\mathop{\min}\limits_{\beta \in \mathcal{P}^{*}\left((\pi ^{-1}\mathcal{U})_{0}^{n-1}\right)}\left\{ \mathop{\sum}\limits _{B\in \beta }\mathop{\sup}\limits_{y\in B }e^{n\mathcal{E}\circ \pi_*(\triangle_{y}^{n})}\right\}.$$
	Note that  if $\beta =\{ B_{1},B_{2},\dots ,B_{m}\}\in \mathcal{P}^{*}\left((\pi ^{-1}\mathcal{U})_{0}^{n-1}\right),$ then 
	$$\pi \beta =\{ \pi (B_{1}),\pi (B_{2}),\dots ,\pi (B_{m})\}\in \mathcal{C}_{X} \text{~and~}\pi \beta \succeq \mathcal{U}_{0}^{n-1}.$$
	Hence 
	$$\mathop{\sum}\limits _{i=1}^{m}\sup_{y\in B_{i}}e^{n\mathcal{E}\circ \pi_*(\triangle_{y}^{n})}=\mathop{\sum}\limits _{i=1}^{m}\sup_{x\in \pi (B_{i})}e^{n\mathcal{E}(\triangle_{x}^{n})}\geq p_{n}(T,\mathcal{E};\mathcal{U}).$$ 
	By the arbitrariness  of $\beta,$ we have $p_{n}(S,\mathcal{E}\circ \pi_*; \pi ^{-1}\mathcal{U})\ge p_{n}(T,\mathcal{E};\mathcal{U}).$
	
	Therefore,  $p_{n}(S,\mathcal{E}\circ \pi_*; \pi ^{-1}\mathcal{U})= p_{n}(T,\mathcal{E};\mathcal{U})$,  and the conclusion of the lemma follows.
\end{proof}

\section{ Proof of Theorem \ref{T2}.} 

 In this section,  we prove Theorem \ref{T2}.
Let $(X,T)$ be a {\rm TDS}, $\mathcal{E}\colon \mathcal{M}(X)\to\mathbb{R}$ be an energy and $\mathcal{U}\in \mathcal{C}_{X}^{o}$.  We prove Theorem \ref{T2} by showing
 
\begin{equation}\label{3.1}
\begin{split}
	\mathop{\sup}\limits _{\mu \in \mathcal{M}^e(X,T)}\{h_{\mu}(T,\mathcal{U})+\mathcal{E}(\mu)\}\mathop{\leq}\limits_{\textcircled{1}}   \underline{P}(T,\mathcal{E};\mathcal{U})&\mathop{\leq}\limits_{\textcircled{2}} {P}(T,\mathcal{E};\mathcal{U})\\
	&\mathop{\leq}\limits_{\textcircled{3}} \mathop{\sup}\limits _{\mu \in \mathcal{M}(X,T)}\{h_{\mu}(T,\mathcal{U})+\mathcal{E}(\mu)\}\\
	&\mathop{\leq}\limits_{\textcircled{4}} \mathop{\sup}\limits_ {\mu \in \mathcal{M}^e(X,T)} \{h_{\mu}(T, \mathcal{U}) + \mathcal{E}(\mu)\}
\end{split}
\end{equation}

Inequality $\textcircled{1}$ is proved in Proposition \ref{P4.4}.  Inequality $\textcircled{2}$  follows from the definitions of $\underline{P}(T,\mathcal{E};\mathcal{U})$ and ${P}(T,\mathcal{E};\mathcal{U}).$ Inequality $\textcircled{4}$ is proved in Proposition \ref{Prop 3.3} under the assumption that $(T,\mathcal{E};\mathcal{U})$ has an abundance of ergodic measures. Finally,  Inequality $\textcircled{3}$ is proved in Subsection 3.2, Proposition \ref{P4.11}. 

\subsection{Proof of inequality $\textcircled{1}$ and $\textcircled{4}$ in  \eqref{3.1}.} We begin by recalling the following basic lemma.

\begin{lemma}[\cite{P.W}, Lemma 9.9]\label{L4.2}
Let $a_{1} , a_{2} ,\dots, a_{k}$ be given real numbers. If $b_{i}\geq 0$ for every $1\le i\le k,$ and $\sum^{k}_{i=1}b_{i}=1$, then $$\mathop{\sum}\limits_ {i=1}\limits^{k}b_{i}(a_{i}-\log b_{i})\leq \log(\mathop{\sum}\limits_ {i=1}\limits^{k}e^{a_{i}}),$$ and equality holds iff $b_{i}=\frac{e^{a_{i}}}{\sum^{k}_{i=1}e^{a_{i}}}$ for all $i= 1, 2,\dots, k.$
\end{lemma}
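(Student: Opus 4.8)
The plan is to avoid Lagrange multipliers entirely and instead reduce the inequality to the concavity of the logarithm, applied to a cleverly chosen auxiliary probability vector. First I would set $S=\sum_{j=1}^{k}e^{a_{j}}$ and define $p_{i}=e^{a_{i}}/S$, so that $(p_{1},\dots,p_{k})$ is itself a probability distribution. Writing $a_{i}=\log p_{i}+\log S$, the target inequality becomes equivalent to the single statement $\sum_{i}b_{i}\log(p_{i}/b_{i})\le 0$, which is just the nonnegativity of relative entropy (Gibbs' inequality) between the vectors $(b_{i})$ and $(p_{i})$.

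To establish this I would invoke Jensen's inequality for the concave function $\log$, applied with weights $b_{i}$ to the arguments $p_{i}/b_{i}$, restricting the sum to those indices with $b_{i}>0$ (the terms with $b_{i}=0$ contribute nothing under the convention $0\log 0=0$). This yields $\sum_{i}b_{i}\log(p_{i}/b_{i})\le\log\bigl(\sum_{i}b_{i}\cdot(p_{i}/b_{i})\bigr)=\log\bigl(\sum_{i}p_{i}\bigr)=\log 1=0$. Substituting $\log p_{i}=a_{i}-\log S$ and using $\sum_{i}b_{i}=1$ then gives $\sum_{i}b_{i}a_{i}-\log S-\sum_{i}b_{i}\log b_{i}\le 0$, which rearranges exactly into $\sum_{i}b_{i}(a_{i}-\log b_{i})\le\log(\sum_{i}e^{a_{i}})$.

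For the equality characterization I would use the strict concavity of $\log$: equality in the Jensen step forces the argument $p_{i}/b_{i}$ to be constant across all indices carrying positive weight. Combined with the normalizations $\sum_{i}b_{i}=1=\sum_{i}p_{i}$, this constant must equal $1$, so $b_{i}=p_{i}=e^{a_{i}}/\sum_{j}e^{a_{j}}$ for every $i$ (in particular no $b_{i}$ can vanish, since all $p_{i}>0$); conversely, this choice makes every intermediate step an equality.

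Since the statement is a classical lemma, I do not expect a genuine obstacle; the only points requiring care are the bookkeeping for vanishing weights under the convention $0\log 0=0$ and the precise invocation of the equality case of Jensen's inequality. One could equally replace Jensen by the elementary bound $\log x\le x-1$ applied termwise to $x=p_{i}/b_{i}$, giving $b_{i}\log(p_{i}/b_{i})\le p_{i}-b_{i}$ and hence, after summation, the same conclusion, with the equality case (attained iff $x=1$) rendered completely transparent.
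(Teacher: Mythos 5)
Your proof is correct; the paper itself gives no argument for this lemma, simply citing Walters (Lemma 9.9), whose proof is the same Jensen/concavity-of-$\log$ computation you describe (applied directly to $\sum_i b_i\log(e^{a_i}/b_i)\le\log\sum_i b_i\cdot e^{a_i}/b_i$, without even normalizing to $p_i$). The only nitpick is that after restricting to indices with $b_i>0$ you get $\sum_{i:\,b_i>0}p_i\le 1$ rather than $=1$, but since $\log$ is increasing this only strengthens the bound, and your equality analysis correctly recovers that no $b_i$ can vanish.
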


\begin{proposition}\label{P4.4}$\mathrm{(Inequality \textcircled{1})}.$
Let $(X,T)$ be a {\rm TDS},  $\mathcal{E}\colon \mathcal{M}(X)\to\mathbb{R}$ be an energy and $\mathcal{U}\in \mathcal{C}_{X}^{o}$.  Then 
$$\sup\limits_{\mu\in \mathcal{M}^e(X,T)}\left\{h_{\mu}(T, \mathcal{U}) + \mathcal{E}(\mu)\right\} \leq \underline{P}(T, \mathcal{E}; \mathcal{U}).$$
\end{proposition}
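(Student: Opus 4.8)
The plan is to obtain, for each fixed ergodic $\mu$, a lower bound of the shape $\frac1n\log p_n(T,\mathcal{E};\mathcal{U})\geq\int_X\mathcal{E}(\triangle_x^n)\,d\mu+\frac1n H_\mu(\mathcal{U}_0^{n-1})$ valid for every $n$, and then pass to the limit. The engine driving the estimate is the elementary inequality of Lemma \ref{L4.2}, applied with the weights $b_B=\mu(B)$.

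First I would fix $\mu\in\mathcal{M}^e(X,T)$ and $n\in\mathbb{N}$, and invoke Lemma \ref{L4.1} to select a partition $\beta=\{B_1,\dots,B_m\}\in\mathcal{P}^{*}(\mathcal{U}_0^{n-1})$ realizing the minimum, so that $p_n(T,\mathcal{E};\mathcal{U})=\sum_{B\in\beta}\sup_{x\in B}e^{n\mathcal{E}(\triangle_x^n)}$. Since $\beta$ is a partition of $X$, the numbers $b_B=\mu(B)$ are nonnegative and sum to $1$; setting $a_B=\sup_{x\in B}n\mathcal{E}(\triangle_x^n)$ and using the convention $0\log 0=0$ for atoms of measure zero, Lemma \ref{L4.2} yields $\sum_{B\in\beta}\mu(B)\bigl(a_B-\log\mu(B)\bigr)\leq\log p_n(T,\mathcal{E};\mathcal{U})$, that is,
$$\log p_n(T,\mathcal{E};\mathcal{U})\geq\sum_{B\in\beta}\mu(B)\sup_{x\in B}n\mathcal{E}(\triangle_x^n)+H_\mu(\beta).$$

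Two simple estimates then tidy up the right-hand side. Because $\beta\succeq\mathcal{U}_0^{n-1}$ with $\beta\in\mathcal{P}_X$, the definition of $H_\mu(\mathcal{U}_0^{n-1})$ as an infimum over refining partitions gives $H_\mu(\beta)\geq H_\mu(\mathcal{U}_0^{n-1})$. For the energy term, $\sup_{x\in B}n\mathcal{E}(\triangle_x^n)\geq n\mathcal{E}(\triangle_x^n)$ for every $x\in B$, so $\mu(B)\sup_{x\in B}n\mathcal{E}(\triangle_x^n)\geq\int_B n\mathcal{E}(\triangle_x^n)\,d\mu$, and summing over $\beta$ gives $\sum_{B\in\beta}\mu(B)\sup_{x\in B}n\mathcal{E}(\triangle_x^n)\geq n\int_X\mathcal{E}(\triangle_x^n)\,d\mu$. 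Dividing by $n$ produces
$$\frac1n\log p_n(T,\mathcal{E};\mathcal{U})\geq\int_X\mathcal{E}(\triangle_x^n)\,d\mu+\frac1n H_\mu(\mathcal{U}_0^{n-1}).$$

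Finally I would take $\liminf_{n\to\infty}$ on both sides. The entropy term converges to $h_\mu(T,\mathcal{U})$ by definition. The crux, and the step I expect to require the most care, is showing $\int_X\mathcal{E}(\triangle_x^n)\,d\mu\to\mathcal{E}(\mu)$. Here I would exploit ergodicity: applying Birkhoff's theorem to a countable dense family in $C(X)$ shows that for $\mu$-a.e.\ $x$ the empirical measures $\triangle_x^n$ converge weak-$\ast$ to $\mu$, whence $\mathcal{E}(\triangle_x^n)\to\mathcal{E}(\mu)$ $\mu$-a.e.\ by continuity of $\mathcal{E}$; since $\mathcal{E}$ is continuous on the compact space $\mathcal{M}(X)$ it is bounded, so dominated convergence delivers the desired limit. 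Combining the three limits gives $\underline{P}(T,\mathcal{E};\mathcal{U})\geq h_\mu(T,\mathcal{U})+\mathcal{E}(\mu)$, and taking the supremum over $\mu\in\mathcal{M}^e(X,T)$ completes the proof.
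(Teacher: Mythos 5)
Your proof is correct and follows the same skeleton as the paper's: both select a minimizing partition $\beta\in\mathcal{P}^{*}(\mathcal{U}_0^{n-1})$ via Lemma \ref{L4.1}, apply Lemma \ref{L4.2} with weights $b_B=\mu(B)$ to get $\log p_n(T,\mathcal{E};\mathcal{U})\ge H_\mu(\beta)+\sum_{B\in\beta}\mu(B)\sup_{x\in B}n\mathcal{E}(\triangle_x^n)$, and bound $H_\mu(\beta)\ge H_\mu(\mathcal{U}_0^{n-1})$. Where you diverge is in handling the energy term: the paper fixes $\epsilon>0$, uses uniform continuity of $\mathcal{E}$ together with Egorov's theorem to produce a set $\Gamma$ with $\mu(\Gamma)>1-\epsilon$ on which $\mathcal{E}(\triangle_x^n)$ is uniformly close to $\mathcal{E}(\mu)$ for large $n$, and then discards the atoms of $\beta$ missing $\Gamma$; you instead bound the weighted sum below by $n\int_X\mathcal{E}(\triangle_x^n)\,d\mu$ and pass to the limit by Birkhoff plus dominated convergence (using that $\mathcal{E}$ is bounded on the compact space $\mathcal{M}(X)$). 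Your variant is slightly cleaner: it avoids the extra $\epsilon$-bookkeeping and, in particular, avoids the step of dropping the terms indexed by $\beta\setminus\beta_1$, which in the paper tacitly requires those terms to be nonnegative (harmless, since one may add a constant to $\mathcal{E}$, but your route never raises the issue). Both arguments are valid; the paper's Egorov device is the more standard template in this local-pressure literature, while yours trades it for a one-line integration argument.
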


\begin{proof}
Fix any $\mu \in \mathcal{M}^e(X,T).$ Since $\mathcal{E}$ is uniformly continuous on $\mathcal{M}(X)$, for any $\epsilon>0,$ there exists $
\delta>0$ such that for any $\nu\in \mathcal{M}(X)$ with $W(\nu,\mu)< \delta,$  one has  $|\mathcal{E}(\nu)-\mathcal{E}(\mu)|<\epsilon.$ Since  $\mu \in \mathcal{M}^e(X,T),$  for $\mu$-almost all $x\in X$,  $\triangle_{x}^{n} \rightarrow\mu$ as $n\to \infty$.
Then by Egorov’s Theorem, there exists $N\geq 1$ and a measurable subset $\Gamma$ with $\mu(\Gamma)>1-\epsilon$ such that $W(\triangle_{x}^{n},\mu)<\delta$ for every $x\in \Gamma$ and  $n\ge N$. It follows that 
\begin{equation}\label{3.5.5}
	 |\mathcal{E}(\triangle_{x}^{n})-\mathcal{E}(\mu)|<\epsilon,\quad x\in \Gamma, n\ge N. 
\end{equation}
By Lemma \ref{L4.1},  for any $n \in \mathbb{N}$, there exists a finite partition $\beta \in \mathcal{P}^{*}(\mathcal{U}_{0}^{n-1})$ such that $$\mathop{\sum}\limits_ {B\in \beta} \mathop{\sup}\limits_ {x\in B} e^{n \mathcal{E}(\triangle_{x}^{n})}=p_{n}(T,\mathcal{E};\mathcal{U}).$$ It follows from Lemma \ref{L4.2} that 
\begin{equation}\label{3.3}
\begin{aligned}
\log p_{n}(T,\mathcal{E};\mathcal{U})&=\log \mathop{\sum}\limits_ {B\in \beta} \mathop{\sup}\limits_ {x\in B} e^{n \mathcal{E}(\triangle_{x}^{n})}\\
&\geq \sum_{B \in \beta}\mu (B)\left(\mathop{\sup_{x \in B}n \mathcal{E}(\triangle_{x}^{n})}-\log\mu(B)\right)\\&=H_{\mu}(\beta)+\mathop{\sum}\limits_ {B\in \beta} \mathop{\sup}\limits_ {x\in B}n \mathcal{E}(\triangle_{x}^{n})\cdot\mu(B).
\end{aligned}
\end{equation}

We  focus on the latter item. Let $\beta _{1}=\{B\in \beta : B \cap  \Gamma\neq \varnothing\}.$ Then 
\[\sum_{B\in \beta_{1}} \mu(B) \geq \sum_{B\in \beta_{1}} \mu(B \cap \Gamma) = \mu \left( \bigcup_{B\in\beta_{1}} (B \cap \Gamma) \right) = \mu(\Gamma) > 1 - \epsilon.\]
Therefore,  \eqref{3.5.5} and \eqref{3.3}  give  
\begin{equation}\nonumber
\begin{aligned}
\log (p_{n}(T, \mathcal{E}; \mathcal{U}))&\geq H_{\mu}(\beta)+\mathop{\sum}\limits_ {B\in \beta_{1}} \mathop{\sup}\limits_ {x\in B}n \mathcal{E}(\triangle_{x}^{n})\cdot\mu(B)\\&\geq H_{\mu}(\beta)+\mathop{\sum}\limits_ {B\in \beta_{1}} n( \mathcal{E}(\mu)-\epsilon)\cdot\mu(B)\\&\geq H_{\mu}(\beta)+ n( \mathcal{E}(\mu)-\epsilon)(1-\epsilon)\\
&\geq H_{\mu}(\mathcal{U}_{0}^{n-1})+ n( \mathcal{E}(\mu)-\epsilon)(1-\epsilon),
\end{aligned}
\end{equation}
where the last inequality follows from  $\beta\succeq \mathcal{U}_{0}^{n-1}$. Dividing the above equation by $n$ and taking the liminf  as $n\rightarrow\infty$, one has 
$$ \underline{P}(T, \mathcal{E}; \mathcal{U})\ge h_{\mu}(T, \mathcal{U}) + ( \mathcal{E}(\mu)-\epsilon)(1-\epsilon).$$
 By the arbitrariness of $\epsilon$, the result follows.
\end{proof}

 \begin{proposition}\label{Prop 3.3}{\rm{(Inequality \textcircled{4})}.}
 	Let $(X,T)$ be a {\rm TDS}, $\mathcal{E}\colon \mathcal{M}(X)\to\mathbb{R}$ be an energy and $\mathcal{U}\in \mathcal{C}_{X}^{o}$. Suppose that $(T,\mathcal{E};\mathcal{U})$ has an abundance of ergodic measures. Then 
 	$$\mathop{\sup}\limits _{\mu \in \mathcal{M}(X,T)}\{h_{\mu}(T,\mathcal{U})+\mathcal{E}(\mu)\}\mathop{\leq} \mathop{\sup}\limits_ {\mu \in \mathcal{M}^e(X,T)} \{h_{\mu}(T, \mathcal{U}) + \mathcal{E}(\mu)\}.$$
 \end{proposition}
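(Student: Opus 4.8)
The plan is to observe that this inequality is an almost immediate consequence of the abundance of ergodic measures hypothesis, Definition \ref{D2.1}. Note first that the reverse inequality is trivial, since $\mathcal{M}^e(X,T)\subseteq\mathcal{M}(X,T)$; hence all the content lies in showing that the supremum over all invariant measures does not exceed the supremum taken only over ergodic measures. First I would fix an arbitrary $\mu\in\mathcal{M}(X,T)$ and an arbitrary $\varepsilon>0$. Applying the abundance hypothesis to this $\mu$ and $\varepsilon$, I obtain an ergodic measure $\nu\in\mathcal{M}^e(X,T)$ satisfying $h_{\nu}(T,\mathcal{U})+\mathcal{E}(\nu)>h_{\mu}(T,\mathcal{U})+\mathcal{E}(\mu)-\varepsilon$. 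Since $\nu$ is one particular ergodic measure, the quantity $h_{\nu}(T,\mathcal{U})+\mathcal{E}(\nu)$ is bounded above by $\sup_{\rho\in\mathcal{M}^e(X,T)}\{h_{\rho}(T,\mathcal{U})+\mathcal{E}(\rho)\}$, and therefore this latter supremum already exceeds $h_{\mu}(T,\mathcal{U})+\mathcal{E}(\mu)-\varepsilon$.

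Next I would let $\varepsilon\to 0$ to conclude that $\sup_{\rho\in\mathcal{M}^e(X,T)}\{h_{\rho}(T,\mathcal{U})+\mathcal{E}(\rho)\}\ge h_{\mu}(T,\mathcal{U})+\mathcal{E}(\mu)$, and then take the supremum over all $\mu\in\mathcal{M}(X,T)$ on the right-hand side to obtain exactly the claimed inequality. There is essentially no genuine obstacle here: the entire purpose of the abundance of ergodic measures condition in Definition \ref{D2.1} is precisely to guarantee that every invariant measure can be approximated from below, in the combined quantity $h_{\cdot}(T,\mathcal{U})+\mathcal{E}(\cdot)$, by ergodic ones, so the argument reduces to unwinding that definition and passing to suprema. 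The only point requiring mild care is the order of operations, namely to approximate a fixed $\mu$ first, then let $\varepsilon\to 0$, and only afterwards take the outer supremum over $\mu$, so as to avoid any spurious interchange of limits with suprema.
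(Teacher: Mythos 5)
Your proposal is correct and follows exactly the same route as the paper's proof: apply the abundance hypothesis to a fixed $\mu$ and $\varepsilon$, bound the resulting ergodic value by the supremum over $\mathcal{M}^e(X,T)$, then let $\varepsilon\to 0$ and take the supremum over $\mu$. No issues.
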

 
 \begin{proof}
 	By Definition \ref{D2.1},  for any $\mu \in \mathcal{M}(X,T)$ and any $\varepsilon>0$, there is an ergodic measure $\nu \in \mathcal{M}^e(X,T)$ such that $h_{\mu}(T,\mathcal{U})+\mathcal{E}(\mu) \leq h_{\nu}(T,\mathcal{U})+\mathcal{E}(\nu)+\varepsilon,$ which implies  
 	$$h_{\mu}(T,\mathcal{U})+\mathcal{E}(\mu) \leq \mathop{\sup}\limits_ {\nu \in \mathcal{M}^e(X,T)}\left\{h_{\nu}(T,\mathcal{U})+\mathcal{E}(\nu)\right\}+\varepsilon.$$
 	 Since $\mu \in \mathcal{M}(X,T)$ and   $\varepsilon>0$ are arbitrary, 
 	$$\mathop{\sup}\limits _{\mu \in \mathcal{M}(X,T)}\{h_{\mu}(T,\mathcal{U})+\mathcal{E}(\mu)\}\mathop{\leq} \mathop{\sup}\limits_ {\mu \in \mathcal{M}^e(X,T)} \{h_{\mu}(T, \mathcal{U}) + \mathcal{E}(\mu)\}.$$
 \end{proof}

\subsection{Proof of inequality $\textcircled{3}$ in  \eqref{3.1}.}
  To establish Inequality \textcircled{3}, the following lemma is crucial. 
 
\begin{lemma}\label{L4.5}
  Suppose that  $\left \{ \alpha _l\right \} _{l\ge 1}$ is a sequence of finite  partitions of $X$ which are finer than $\mathcal{U}$. Then for any $\gamma>0,$ there exist  $N=N(\gamma)\in\mathbb{N}, E\in \mathbb{R}$ and   a sequence of finite subsets  $\{D_{n_k}\}_{k\ge 1}$  of $X$   where $n_k\to\infty$, such that for any $k\ge 1,$
\begin{enumerate} 
	\item  for all $1\le l\le n_k$, each atom of $\left (\alpha _l\right) _{0}^{n_k-1}$ contains at most one point of $D_{n_k}$;
	\item $\sum_{x\in D_{n_k}} e^{n_k \mathcal{E}(\triangle_{x}^{n_k})} \geq \frac{1}{2n_kN}\cdot e^{n_k\left(P(T, \mathcal{E}; \mathcal{U})-\gamma\right) }$;
	\item 	For any $\mu \in \mathcal{M}(X)$ that is a convex combination of $\{\triangle_{x}^{n_k}\}_{x\in D_{n_k}}$, we have 
	$$| \mathcal{E}(\mu)-E| \leq \gamma.$$
\end{enumerate}
\end{lemma}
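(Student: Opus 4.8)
The plan is to build the sets $D_{n_k}$ from three ingredients: an extraction along which $p_n(T,\mathcal{E};\mathcal{U})$ realizes the $\limsup$ defining $P(T,\mathcal{E};\mathcal{U})$; a selection of near-optimal representative points taken from the minimizing partitions provided by Lemma \ref{L4.1}; and a pigeonhole over a finite decomposition of the compact space $\mathcal{M}(X)$, which simultaneously fixes the constant $E$ and confines all the empirical measures to a single small Wasserstein ball (giving (3)). Separation (property (1)) will be imposed last, by pruning, and is where the real work lies.

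First I would fix $\gamma>0$. Since $P(T,\mathcal{E};\mathcal{U})=\limsup_n\frac1n\log p_n(T,\mathcal{E};\mathcal{U})$, I choose a strictly increasing sequence $n_k\to\infty$ with $p_{n_k}(T,\mathcal{E};\mathcal{U})\ge e^{n_k(P(T,\mathcal{E};\mathcal{U})-\gamma/2)}$. For each such $n$, Lemma \ref{L4.1} supplies a partition $\beta_n\in\mathcal{P}^*(\mathcal{U}_0^{n-1})$ with $\sum_{B\in\beta_n}\sup_{x\in B}e^{n\mathcal{E}(\triangle_x^n)}=p_n(T,\mathcal{E};\mathcal{U})$. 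As $x\mapsto\triangle_x^n$ is continuous and $\mathcal{E}$ is continuous, the map $x\mapsto e^{n\mathcal{E}(\triangle_x^n)}$ is continuous and bounded, so in each nonempty atom $B$ I may pick $x_B\in B$ with $e^{n\mathcal{E}(\triangle_{x_B}^n)}\ge\frac12\sup_{x\in B}e^{n\mathcal{E}(\triangle_x^n)}$; the resulting finite set then carries weighted sum at least $\frac12 p_n(T,\mathcal{E};\mathcal{U})$. To produce the constant $E$ and property (3), I use uniform continuity of $\mathcal{E}$ on $\mathcal{M}(X)$: choose $\delta>0$ so that $W(\nu_1,\nu_2)<\delta$ forces $|\mathcal{E}(\nu_1)-\mathcal{E}(\nu_2)|<\gamma$, and decompose $\mathcal{M}(X)$ into finitely many Borel pieces $Q_1,\dots,Q_N$ of $W$-diameter $<\delta$; this $N=N(\gamma)$ is the asserted constant. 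Applying pigeonhole to the weighted sum according to which $Q_j$ contains $\triangle_{x_B}^n$, one piece retains a $\frac1N$ fraction; passing to a further subsequence along which this dominant piece is a fixed $Q_{j_0}$, I set $E=\mathcal{E}(\nu_0)$ for a fixed $\nu_0\in Q_{j_0}$ and keep only those $x_B$ with $\triangle_{x_B}^n\in Q_{j_0}$. Since $W(\,\cdot\,,\nu_0)$ is convex (being a supremum of affine functionals), any convex combination of the retained $\triangle_{x_B}^n$ stays within $\delta$ of $\nu_0$, which gives (3); this set still carries weighted sum at least $\frac1{2N}p_n(T,\mathcal{E};\mathcal{U})$.

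The main obstacle is property (1): I must pass to a subset $D_{n_k}$ that meets each atom of $(\alpha_l)_0^{n_k-1}$ in at most one point \emph{simultaneously for all} $1\le l\le n_k$, while losing at most the factor $n_k$ recorded in (2). The natural attempt is greedy pruning: process $l=1,2,\dots,n_k$ in turn, and at stage $l$ discard all but one maximal-weight representative in each atom of $(\alpha_l)_0^{n_k-1}$. Since each stage only removes points, the separations gained at earlier stages survive, so the final set satisfies (1). The delicate point is controlling the cumulative loss, and this is exactly where property (3) is indispensable: it makes all surviving weights $e^{n_k\mathcal{E}(\triangle_x^{n_k})}$ comparable, lying in $[e^{n_k(E-\gamma)},e^{n_k(E+\gamma)}]$, so that the weighted loss at each stage is governed by the number of points sharing an atom rather than by any spread in their weights. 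Combined with the slack $e^{-n_k\gamma/2}$ from the choice of subsequence (which permits a total loss of order $n_k e^{n_k\gamma/2}$) and with the control on the number and spatial distribution of the atoms of the minimizing $\beta_{n_k}$ coming from Lemma \ref{L4.1}, this is what I expect to yield the bound $\frac{1}{2n_kN}e^{n_k(P(T,\mathcal{E};\mathcal{U})-\gamma)}$.

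In summary, steps (2) and (3) I regard as routine once the pigeonhole decomposition of $\mathcal{M}(X)$ and the representative-point selection are set up; the crux is the verification that simultaneous separation against all the $(\alpha_l)_0^{n_k-1}$, $1\le l\le n_k$, can be enforced at a cost of only a single factor $n_k$. Keeping the per-atom multiplicities subexponential, so that the greedy pruning does not erode the weighted sum below the target, is the step I expect to require the most care, and it is the genuine obstacle in the proof.
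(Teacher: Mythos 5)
Your pigeonhole step over a finite decomposition of $\mathcal{M}(X)$ and the resulting constants $N$ and $E=\mathcal{E}(\nu_0)$ match the paper's argument, and you have correctly located the crux in property (i). But the route you sketch for (i) has a genuine gap that your own hedging does not close. You propose to first extract one near-optimal representative per atom of a minimizing partition $\beta_{n_k}\in\mathcal{P}^*(\mathcal{U}_0^{n_k-1})$ and then prune sequentially against $(\alpha_l)_0^{n_k-1}$ for $l=1,\dots,n_k$. There is no containment relation between the atoms of $\beta_{n_k}$ and the atoms of $(\alpha_l)_0^{n_k-1}$ (both merely refine $\mathcal{U}_0^{n_k-1}$), so a single atom of $(\alpha_l)_0^{n_k-1}$ may contain a large number of your surviving representatives; each pruning stage can therefore shrink the set by a multiplicative factor, and over $n_k$ stages the cumulative loss can be exponential in $n_k$, not the polynomial factor $n_k$ demanded by (ii). The comparability of weights you get from (iii) does not help here --- if the cardinality collapses by $2^{n_k}$, so does the weighted sum --- and the slack $e^{n_k\gamma/2}$ from your choice of subsequence absorbs only a loss of that order, which is far too small for fixed small $\gamma$. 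So ``keeping the per-atom multiplicities subexponential'' is not something you can arrange; it is exactly what fails.

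The paper's proof reverses the order of operations and never touches a minimizing partition. It builds the set greedily against all $n$ partitions at once: having chosen $x_1,\dots,x_i$, it sets $X_{i+1}=X_i\setminus\bigcup_{l=1}^{n}A_l(x_i)$, where $A_l(x)$ is the atom of $(\alpha_l)_0^{n-1}$ containing $x$, and picks $x_{i+1}\in X_{i+1}$ with $e^{n\mathcal{E}(\triangle_{x_{i+1}}^{n})}\ge\frac12\sup_{x\in X_{i+1}}e^{n\mathcal{E}(\triangle_{x}^{n})}$. Separation is then automatic, and the key estimate goes in the opposite direction from yours: the pieces $\beta=\{A_l(x_i)\cap X_i\}$ swept out by the construction form a cover finer than $\mathcal{U}_0^{n-1}$, and since $e^{n\mathcal{E}(\triangle_{x_i}^{n})}\ge\frac12\sup_{x\in A_l(x_i)\cap X_i}e^{n\mathcal{E}(\triangle_{x}^{n})}$ for \emph{every} $l$, averaging over the $n$ values of $l$ gives $\sum_{x\in B_n}e^{n\mathcal{E}(\triangle_{x}^{n})}\ge\frac{1}{2n}\sum_{B\in\beta}\sup_{x\in B}e^{n\mathcal{E}(\triangle_{x}^{n})}\ge\frac{1}{2n}\,p_n(T,\mathcal{E};\mathcal{U})$, using that $p_n$ is an \emph{infimum} over all such covers. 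That single averaging step is where the factor $\frac1{2n}$ comes from, it requires no control on weights and no use of (iii), and it is the idea missing from your proposal.
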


\begin{proof}
We begin by proving the following claim.
\begin{claim}
	For any $n\ge 1$, there exists a   finite subset  $B_{n}$  of $X$  such that for any    $1\le l\le n$, each atom of $\left (\alpha _l\right) _{0}^{n-1}$ contains at most one point of $B_{n}$.  Moreover, $$\sum_{x\in B_{n}} e^{n \mathcal{E}(\triangle_{x}^{n})} \geq \frac{1}{2n}\cdot p_{n}(T, \mathcal{E}; \mathcal{U}).$$
\end{claim}
\begin{proof}[Proof of the Claim]
Fix any $n\in \mathbb{N}$.	For each $x\in X,$ denote by $A_l(x)$ the  atom of $\left (\alpha _l\right) _{0}^{n-1}$ which contains $x.$ We now construct the set $B_n$.  Choose $x_1\in X$ such that $e^{n  \mathcal{E}(\triangle_{x_1}^{n})}=\max_{x\in X}e^{n  \mathcal{E}(\triangle_{x}^{n})}$ and denote $X_1=X$. By induction, suppose that $X_i$ and $x_i\in X_i$ have been defined.  Let $X_{i+1}=X_i\setminus\bigcup_{l=1}^n A_l(x_i),$ and choose $x_{i+1}\in X_{i+1}$ such that $e^{n  \mathcal{E}(\triangle_{x_{i+1}}^{n})}\ge\frac{1}{2}\sup_{x\in X_{i+1}}e^{n  \mathcal{E}(\triangle_{x}^{n})}$.  Since $\{A_l(x):1\le l\le n, x\in X\}$ is a finite set,  there exists $m\ge 1$ such that $X_m\setminus\bigcup_{l=1}^n A_l(x_m)=\varnothing.$ Then we obtain  a finite set $B_n:=\{x_1,\dots,x_m\}$ such that for each $1\le i\le m,$
\begin{itemize}
	\item[(1)]   $e^{n  \mathcal{E}(\triangle_{x_{i}}^{n})}\ge\frac{1}{2}\sup_{x\in X_{i}}e^{n  \mathcal{E}(\triangle_{x}^{n})};$ 
	\item[(2)] $X_i= X_{i+1}\cup \left(\bigcup_{l=1}^n A_l(x_i)\cap X_i   \right),$ where $X_{m+1}=\varnothing$.
\end{itemize}
By   construction,  for all $1\le l\le n$, each atom of $\left (\alpha _l\right) _{0}^{n-1}$ contains at most one point of $B_{n}$.
Moreover,  
\begin{align*}
	e^{n\mathcal{E}(\triangle_{x_{i}}^{n})}\ge\frac{1}{2}\sup_{x\in X_{i}}e^{n  \mathcal{E}(\triangle_{x}^{n})}\ge \frac{1}{2}\sup_{x\in A_l(x_i)\cap X_i}e^{n  \mathcal{E}(\triangle_{x}^{n})}, \quad \forall 1\le l \le n.
\end{align*}
Let $\beta=\{A_l(x_i)\cap X_i:1\le l\le n, 1\le i\le m\}.$ Then $\beta\in \mathcal{C}_X$ and $\beta\succeq \mathcal{U}_0^{n-1}$. Therefore, 
\begin{align*}
	\sum_{x\in B_{n}} e^{n \mathcal{E}(\triangle_{x}^{n})}=\sum_{i=1}^{m}e^{n\mathcal{E}(\triangle_{x_{i}}^{n})}
	&\ge \sum_{i=1}^{m}\frac{1}{n}\sum_{l=1}^{n}\frac{1}{2}\sup_{x\in A_l(x_i)\cap X_i}e^{n  \mathcal{E}(\triangle_{x}^{n})}\\
	&=\frac{1}{2n}\sum_{B\in\beta}\sup_{x\in B}e^{n  \mathcal{E}(\triangle_{x}^{n})}\\
	&\ge \frac{1}{2n}\cdot p_{n}(T, \mathcal{E}; \mathcal{U}).
\end{align*}
\end{proof}
We now continue the proof of Lemma \ref{L4.5}.  Since $\mathcal{E}$ is uniformly continuous on $\mathcal{M}(X)$, for any $\gamma>0,$ there exists $\delta>0$ such that  for all $\mu,\nu \in \mathcal{M}(X)$ satisfying $W(\mu ,\nu )\leq \delta,$ one has $|\mathcal{E}(\mu)-\mathcal{E}(\nu)|\leq \gamma.$

Let $S=\{\nu_{1},\nu_{2},\dots,\nu_{N}\}$ be a $\delta\text{-}$dense subset of    $\mathcal{M}(X)$. For  each $1\le i\le N,$ denote $B(\nu_i,\delta)=\{\mu \in \mathcal{M}(X):W(\mu, \nu_i )\leq \delta\}$. Set $V_1=B(\nu_1, \delta)$ and $V_i=B(\nu_i, \delta)\setminus\bigcup_{j=1}^{i-1}V_j$ for $2\le i\le N.$ Then  $\{V_{i}\}_{1\le i\le N}$ is a partition of $\mathcal{M}(X)$, and   $|\mathcal{E}(\mu)-\mathcal{E}(\nu_i)|\leq \gamma$ for all $\mu \in V_{i}.$ 

Let $\{n_k\}_{k\ge 1}$ be a sequence of integers diverging to infinity such that 
\begin{equation*} 
	\lim\limits_{k\to\infty} \frac{1}{n_k}\log p_{n_k}(T,\mathcal{E};\mathcal{U})=P(T, \mathcal{E}; \mathcal{U}).
\end{equation*}
 Then there exists $K_0\in \mathbb{N}$ such that  $p_{n_k}(T,\mathcal{E};\mathcal{U})\ge e^{n_k\left(P(T, \mathcal{E}; \mathcal{U})-\gamma\right)}$ holds for every $k\ge K_0.$
Denote ${C}_{n_k,i}=\{x\in B_{n_k}:\triangle_{x}^{n_k}\in V_{i}\}$. Then by the Claim above,  for every $k\ge K_0,$
\begin{align*}
	\sum_{x\in B_{n_k} }e^{n_k \mathcal{E}(\triangle_{x}^{n_k})} =\sum_{i=1}^N\sum_{x\in {C}_{n_k,i} }e^{n_k \mathcal{E}(\triangle_{x}^{n_k})}
	&\ge \frac{1}{2n_k}\cdot p_{n_k}(T, \mathcal{E}; \mathcal{U})\\
	&\ge \frac{1}{2n_k}\cdot e^{n_k\left(P(T, \mathcal{E}; \mathcal{U})-\gamma\right)}.
\end{align*}
It follows that  for each $k\ge K_0$  there exists $1\le i\le N$ such that 
$$\sum_{x\in {C}_{n_k,i} }e^{n_k \mathcal{E}(\triangle_{x}^{n_k})}\ge  \frac{1}{2n_kN}\cdot e^{n_k\left(P(T, \mathcal{E}; \mathcal{U})-\gamma\right)}.$$
 Therefore, there exists $i_0\in [1,N]$ and a subsequence of  $\{n_k\}_{k\ge 1}$(still denoted by $\{n_k\}_{k\ge 1}$) such that
 $$\sum_{x\in {C}_{n_k,i_0} }e^{n \mathcal{E}(\triangle_{x}^{n_k})}\ge   \frac{1}{2n_kN}\cdot e^{n_k\left(P(T, \mathcal{E}; \mathcal{U})-\gamma\right)}, \quad \forall k\ge 1.$$
 
 Denote $D_{n_k}={C}_{n_k,i_0}$ and $E=\mathcal{E}(\nu_{i_0}).$ Then $D_{n_k}$ satisfies (\romannumeral1) and  (\romannumeral2) of Lemma \ref{L4.5}. To check  (\romannumeral3),  suppose that a probability measure $\mu=\sum _{x\in {D}_{n_k}}\lambda_{x}\triangle_{x}^{n_{k}}$ is a convex combination of the measures $\triangle_{x}^{n_{k}}$, then  it follows from $W(\triangle_{x}^{n_{k}},\nu_{i_0})\le \delta$ for every $x\in D_{n_k}$ that $W(\mu,\nu_{i_0})\le \delta$. Hence $|\mathcal{E}(\mu)-E|\leq|\mathcal{E}(\mu)-\mathcal{E}(\nu_{i_0})|\leq \gamma.$
\end{proof}

Now we are ready to prove   inequality $\textcircled{3}$ in  \eqref{3.1}.
\begin{proposition}\label{P4.11}{\rm (Inequality $\textcircled{3}$).}
	Let $(X,T)$ be a {\rm TDS}, $\mathcal{E}\colon \mathcal{M}(X)\to\mathbb{R}$ be an energy and  $\mathcal{U}\in\mathcal{C}_X^0$. Then
	$$ P(T,\mathcal{E};\mathcal{U})\leq \mathop{\sup}\limits _{\mu \in \mathcal{M}(X,T)}\{h_{\mu}(T,\mathcal{U})+\mathcal{E}(\mu)\}.$$
\end{proposition}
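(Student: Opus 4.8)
The plan is to reduce to the invertible case and then run a Misiurewicz-type construction built on Lemma \ref{L4.5}. The reduction is forced by a structural point: the construction below will only bound the atomic entropy by single-partition entropies $h_\nu(S,\alpha)$, whose infimum over $\alpha\succeq\mathcal{U}$ is $h_\nu^{+}(\,\cdot\,,\mathcal{U})$, whereas the statement requires the (generally smaller) quantity $h_\mu(T,\mathcal{U})$, and these coincide only on invertible systems by Proposition \ref{prop h^+}(2). So I first pass to the natural extension $(Y,S)$ of $(X,T)$, which is invertible, with factor map $\pi\colon Y\to X$; set $\mathcal{W}=\pi^{-1}\mathcal{U}\in\mathcal{C}_Y^{o}$ and $\mathcal{E}'=\mathcal{E}\circ\pi_*$. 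By Lemma \ref{lem 2.4}, $P(T,\mathcal{E};\mathcal{U})=P(S,\mathcal{E}';\mathcal{W})$, and by Lemma \ref{lem 2.5} together with the surjectivity of $\pi_*\colon\mathcal{M}(Y,S)\to\mathcal{M}(X,T)$ (so that $h_\nu(S,\mathcal{W})=h_{\pi_*\nu}(T,\mathcal{U})$ and $\mathcal{E}'(\nu)=\mathcal{E}(\pi_*\nu)$), the target supremum is preserved. Hence it suffices to prove $P(S,\mathcal{E}';\mathcal{W})\le\sup_{\nu\in\mathcal{M}(Y,S)}\{h_\nu(S,\mathcal{W})+\mathcal{E}'(\nu)\}$.

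Fix $\gamma>0$. On $Y$ I first fix once and for all a countable family of finite partitions $\{\alpha_l\}_{l\ge1}$ with $\alpha_l\succeq\mathcal{W}$ that is \emph{universal} in the sense that $\inf_l h_\nu(S,\alpha_l)=h_\nu^{+}(S,\mathcal{W})$ for every $\nu\in\mathcal{M}(Y,S)$; such a family is obtained by enumerating all partitions subordinate to $\mathcal{W}$ whose atoms are finite unions of members of a fixed countable algebra generating the Borel sets, and approximating an arbitrary element of $\mathcal{W}^{*}$ in the Rokhlin metric (invoking Proposition \ref{prop h^+}(1)). Applying Lemma \ref{L4.5} to $\{\alpha_l\}$ produces $N$, $E$ and finite sets $D_{n_k}\subset Y$ with $n_k\to\infty$ satisfying (i)--(iii). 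Put $Z_{n_k}=\sum_{y\in D_{n_k}}e^{n_k\mathcal{E}'(\triangle_y^{n_k})}$, define the weighted atomic measure $\sigma_{n_k}=Z_{n_k}^{-1}\sum_{y\in D_{n_k}}e^{n_k\mathcal{E}'(\triangle_y^{n_k})}\delta_y$, and set $\mu_{n_k}=\int\triangle_y^{n_k}\,d\sigma_{n_k}(y)=\frac{1}{n_k}\sum_{i=0}^{n_k-1}S_*^{\,i}\sigma_{n_k}$. Passing to a subsequence, let $\mu_{n_k}\to\nu$ weak-$*$; then $\nu\in\mathcal{M}(Y,S)$.

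For the entropy term, the equality case of Lemma \ref{L4.2} (with $a_y=n_k\mathcal{E}'(\triangle_y^{n_k})$ and $b_y=\sigma_{n_k}(\{y\})$) gives $\log Z_{n_k}=H_{\sigma_{n_k}}(\xi)+\int n_k\mathcal{E}'(\triangle_y^{n_k})\,d\sigma_{n_k}$, where $\xi$ is the partition into the singletons of $D_{n_k}$. By property (i) of Lemma \ref{L4.5}, for each $l\le n_k$ every atom of $(\alpha_l)_0^{n_k-1}$ meets $D_{n_k}$ in at most one point, so $H_{\sigma_{n_k}}(\xi)=H_{\sigma_{n_k}}\big((\alpha_l)_0^{n_k-1}\big)$. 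The standard subadditivity/concavity (Misiurewicz) estimate, valid since $\mu_{n_k}=\frac{1}{n_k}\sum_i S_*^{\,i}\sigma_{n_k}$, yields for each fixed $q,l$ a bound $\frac{1}{n_k}H_{\sigma_{n_k}}\big((\alpha_l)_0^{n_k-1}\big)\le\frac{1}{q}H_{\mu_{n_k}}\big((\alpha_l)_0^{q-1}\big)+\frac{C(q,l)}{n_k}$. Letting $k\to\infty$ (the passage to the limit in the empirical measures being the standard one, arranging the atoms of the $\alpha_l$ to be $\nu$-continuity sets) and then $q\to\infty$ gives $\limsup_k\frac{1}{n_k}H_{\sigma_{n_k}}(\xi)\le h_\nu(S,\alpha_l)$ for every $l$, hence $\limsup_k\frac{1}{n_k}H_{\sigma_{n_k}}(\xi)\le\inf_l h_\nu(S,\alpha_l)=h_\nu^{+}(S,\mathcal{W})=h_\nu(S,\mathcal{W})$, the last equality by invertibility. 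The nonlinearity is absorbed by property (iii): a single $\triangle_y^{n_k}$ is a (trivial) convex combination, so $\int\mathcal{E}'(\triangle_y^{n_k})\,d\sigma_{n_k}\le E+\gamma$, while $|\mathcal{E}'(\nu)-E|\le\gamma$ since $\nu$ is a limit of convex combinations of the $\triangle_y^{n_k}$, giving $E\le\mathcal{E}'(\nu)+\gamma$. Combining these with the lower bound $\frac{1}{n_k}\log Z_{n_k}\ge P(S,\mathcal{E}';\mathcal{W})-\gamma-\frac{\log(2n_kN)}{n_k}$ from property (ii) and taking $\limsup_k$, I obtain $P(S,\mathcal{E}';\mathcal{W})-\gamma\le h_\nu(S,\mathcal{W})+\mathcal{E}'(\nu)+2\gamma\le\sup_{\nu'\in\mathcal{M}(Y,S)}\{h_{\nu'}(S,\mathcal{W})+\mathcal{E}'(\nu')\}+2\gamma$. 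Letting $\gamma\to0$ and translating back through Lemmas \ref{lem 2.4}--\ref{lem 2.5} finishes the proof.

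The main obstacle is exactly the discrepancy between $h_\mu(T,\mathcal{U})$ and $h_\mu^{+}(T,\mathcal{U})$: the separated-set construction controls $H_{\sigma_{n_k}}$ only through the dynamical refinements $(\alpha_l)_0^{n_k-1}$ of single partitions, and the infimum of the resulting single-partition rates is $h_\nu^{+}$, never smaller. Forcing this to equal $h_\nu$ is what necessitates both the reduction to the invertible natural extension and the universal choice of $\{\alpha_l\}$ (together with the routine $\nu$-continuity-set adjustment in the limit passage). By contrast, the feature distinguishing this from the classical local pressure, namely the nonlinearity of $\mathcal{E}$, is handled cleanly by property (iii) of Lemma \ref{L4.5}, which is precisely what permits replacing the genuinely nonlinear average $\int\mathcal{E}'(\triangle_y^{n_k})\,d\sigma_{n_k}$ by $\mathcal{E}'(\nu)$ up to $O(\gamma)$.
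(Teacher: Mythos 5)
Your overall architecture is the right one and matches the paper's: reduce to an invertible extension so that $h_\nu=h_\nu^{+}$ (Proposition \ref{prop h^+}(2)), apply Lemma \ref{L4.5} to a countable family of partitions finer than the cover, form the weighted atomic measures and their empirical averages, use the equality case of Lemma \ref{L4.2} plus properties (ii)--(iii) of Lemma \ref{L4.5} for the lower bound on $H_{\sigma_{n_k}}(\xi)$, and run the Misiurewicz subadditivity/concavity estimate for the upper bound. However, there is a genuine gap at the weak-$*$ limit passage. You must fix the countable family $\{\alpha_l\}$ \emph{before} invoking Lemma \ref{L4.5}, since its conclusion (i) (each atom of $(\alpha_l)_0^{n_k-1}$ meets $D_{n_k}$ in at most one point) is engineered for that specific family; the limit measure $\nu$ only emerges afterwards, as a weak-$*$ limit of the $\mu_{n_k}$. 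You then need $H_{\mu_{n_k}}\bigl((\alpha_l)_0^{q-1}\bigr)\to H_{\nu}\bigl((\alpha_l)_0^{q-1}\bigr)$, which requires the atoms to be $\nu$-continuity sets; but $\nu$ is not available when $\{\alpha_l\}$ is chosen, and no fixed countable family of Borel partitions subordinate to $\mathcal{W}$ on the natural extension can consist of continuity sets for every candidate limit measure (the map $\mu\mapsto\mu(B)$, hence $\mu\mapsto H_\mu(\beta)$, has no semicontinuity for general Borel $B$). Your parenthetical ``arranging the atoms of the $\alpha_l$ to be $\nu$-continuity sets'' is therefore circular: changing the $\alpha_l$ changes the $D_{n_k}$ and hence $\nu$.

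This is precisely why the paper does not pass to the natural extension but to an invertible \emph{zero-dimensional} extension (via \cite{F.B}): there the partitions in $\mathcal{U}^{*}$ can be taken clopen, which simultaneously (a) gives a countable family realizing $\inf_l h_\nu(T,\alpha_l)=h_\nu^{+}(T,\mathcal{U})$ for every $\nu$, and (b) makes every atom a continuity set for every measure, so the limit $k\to\infty$ in \eqref{3.7} is legitimate. Your ``universal family via Rokhlin-metric approximation'' can plausibly be made to deliver (a) on a general space (though you only sketch it), but it cannot deliver (b), and (b) is indispensable. Replacing the natural extension by the zero-dimensional invertible extension, and the ad hoc universal family by the clopen partitions of $\mathcal{U}^{*}$, repairs the argument; the rest of your proof, including the treatment of the nonlinear term via property (iii) of Lemma \ref{L4.5} and the transfer through Lemmas \ref{lem 2.5} and \ref{lem 2.4}, is sound and agrees with the paper.
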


\begin{proof}
	We split the proof into two steps.
	
	{\bfseries Step 1.} Suppose that $(X,T)$ is an   invertible, zero-dimensional TDS (i.e., it has a topological base consisting of clopen subsets).
Denote $\mathcal{U}=\{U_{1}, U_{2},..., U_{d}\}$ and let $$\mathcal{U}^{*}=\{\alpha \in \mathcal{P}_{X}: \alpha =\{A_{1}, A_{2},...,A_{d}\}, A_{i}\subset U_{i}, i=1,2,...,d\}.$$ 

 Since $X$ is zero-dimensional,   $\mathcal{U}^{*}$ forms a countable set of partitions. Moreover, these partitions are   finer than  $\mathcal{U}$  and  consist entirely of clopen sets.  Denote  $\mathcal{U}^{*}= \{\alpha _{l}:l\ge 1\}$.

Then by Lemma \ref{L4.5}, for any $\gamma>0,$ there exist  $N=N(\gamma)\in\mathbb{N}, E\in \mathbb{R}$ and   a sequence of finite subsets  $\{D_{n_k}\}_{k\ge 1}$  of $X$   where $n_k\to\infty$, such that for any $k\ge 1,$
\begin{enumerate} 
	\item  for all $1\le l\le n_k$, each atom of $\left (\alpha _l\right) _{0}^{n_k-1}$ contains at most one point of $D_{n_k}$;
	\item $\sum_{x\in D_{n_k}} e^{n_k \mathcal{E}(\triangle_{x}^{n_k})} \geq     \frac{1}{2n_kN}\cdot e^{n_k\left(P(T, \mathcal{E}; \mathcal{U})-\gamma\right)}$;
	\item if $\mu \in \mathcal{M}(X)$ is a convex combination of the measures $\triangle_{x}^{n_{k}}$ where $x$ runs over $D_{n_k}$, then $| \mathcal{E}(\mu)-E| \leq \gamma.$
\end{enumerate}
Let
$$\nu_{k}:=\sum_{x\in{D}_{n_k}}{\lambda_{n_k}(x)}\delta_{x},$$
where $\lambda_{n_k}(x)=\frac{e^{n_{k} \mathcal{E}(\triangle_{x}^{n_{k}})} }{\sum_{y\in{D}_{n_k}}e^{n_{k} \mathcal{E}(\triangle_{y}^{n_{k}})}}.$ 
Define the empirical measure
$$\mu_{k}:=\frac{1}{n_{k}}\sum_{i=0}^{n_{k}-1}T_{*}^{i}\nu_{k}=\sum_{x\in{D}_{n_k}}{\lambda_{n_k}(x)}\triangle_{x}^{n_{k}}.$$
Up to extracting a further subsequence, we may  assume $\mu _{\infty}=\lim\limits_{k\to\infty}\mu _{k}$.
Then $\mu _{\infty}\in \mathcal{M}(X,T)$.   Moreover, by $| \mathcal{E}(\mu_k)-E| \leq \gamma$ for every $k\ge 1$, we also obtain 
\begin{equation}\label{eq3.5}
	| \mathcal{E}(\mu_\infty)-E| \leq \gamma.
\end{equation}
 Since $T$ is invertible, by Proposition \ref{prop h^+}, $$h_{\mu_\infty}(T,\mathcal{U})=h_{\mu_\infty}^{+}(T,\mathcal{U})=\mathop{\inf}\limits_{\alpha\in\mathcal{U}^{*}}h_{\mu_\infty}(T,\alpha)=\mathop{\inf}\limits_{l\ge 1}h_{\mu_\infty}(T,\alpha_{l}).$$ 
It suffices to prove for each $l\geq 1$: 
\[{P}(T, \mathcal{E}; \mathcal{U})\le  h_{\mu_\infty}(T,\alpha_{l})+\mathcal{E}(\mu_\infty)+3\gamma.  \]

Since each atom of $\left (\alpha _l\right) _{0}^{n_k-1}$ contains at most one point of $D_{n_k}$, it follows that
 $$H_{\nu_{k}}\left((\alpha_{l})_{0}^{n_{k}-1}\right)= \sum_{x \in D_{n_k}}\lambda_{n_k}(x)\log \frac{1}{\lambda_{n_k}(x)}.$$
By Lemma \ref{L4.5} and inequality \eqref{eq3.5}, we have 
\begin{align*}
	 \log \frac{1}{\lambda_{n_k}(x)}&\ge \log\frac{1}{2Nn_k}+n_k\left(P(T, \mathcal{E}; \mathcal{U})-\gamma\right)- n_k \mathcal{E}(\triangle_{x}^{n_k})\\
	                                                         &\ge n_k\left(P(T, \mathcal{E}; \mathcal{U})-\gamma\right)- n_k\cdot(E+\gamma)-\log{(2Nn_k)}\\
	                                                         &\ge  n_k\left(P(T, \mathcal{E}; \mathcal{U})-\gamma\right)- n_k\cdot(\mathcal{E}(\mu_\infty)+2\gamma)-\log{(2Nn_k)}.
\end{align*}
Therefore, 
\begin{equation}\label{3.6}
	\frac{1}{n_k} H_{\nu_{k}}\left((\alpha_{l})_{0}^{n_{k}-1}\right)\ge P(T, \mathcal{E}; \mathcal{U})-  \mathcal{E}(\mu_\infty)-3\gamma-\frac{\log{(2Nn_k)}}{n_k}.
\end{equation}

Fix $m,n_{k}\in \mathbb{N}$ with $n_{k}>l$ and $1\leq m\leq n_{k}-1.$ Let $a(j)=[\frac{n_{k}-j}{m}], j = 0, 1, \dots,m-1$. Then  
\begin{equation}\nonumber
\begin{aligned}
\mathop{\bigvee}\limits_ {i=0}^{n_{k}-1}T^{-i}\alpha_{l}= \mathop{\bigvee}\limits_ {r=0}^{a(j)-1}T^{-(mr+j)}(\alpha_{l})_{0}^{m-1}\vee \mathop{\bigvee}\limits_ {t\in S_{j}}T^{-t}\alpha_{l},
\end{aligned}
\end{equation}
where $S_{j}=\{0, 1, \dots,j-1\}\cup\{j+ma(j),\dots,n_{k}-1\}.$ Since  $|S_{j}|\leq 2m,$ by the sub-additivity of $H_{\nu_k}(\cdot)$,
\begin{align*}
H_{\nu_{k}}\left((\alpha_{l})_{0}^{n_{k}-1}\right)
&\leq \mathop{\sum}\limits_ {r=0}\limits ^{a(j)-1}H_{\nu_{k}}\left(T^{-(mr+j)}(\alpha_{l})_{0}^{m-1}\right)+H_{\nu_{k}}\left(\mathop{\bigvee}\limits_ {l\in S_{j}}T^{-l}\alpha_{l}\right)\\
&\leq \mathop{\sum}\limits_ {r=0}\limits ^{a(j)-1}H_{T^{(mr+j)}_{*}\nu_{k}}\left((\alpha_{l})_{0}^{m-1}\right)+2m\log d,
\end{align*}
 where $d=|\mathcal{U}|$ represents the cardinality of $\mathcal{U}.$
Summing $j$   from $0$ to $m-1$, we obtain 
\begin{align*} 
H_{\nu_{k}}\left((\alpha_{l})_{0}^{n_{k}-1}\right)
&\leq \frac{1}{m}\mathop{\sum}\limits_ {j=0}\limits ^{m-1}\mathop{\sum}\limits_ {r=0}\limits ^{a(j)-1}H_{T_{*}^{(mr+j)}\nu_{k}}((\alpha_{l})_{0}^{m-1})+2m\log d\\
&\leq \frac{1}{m}\mathop{\sum}\limits_ {j=0}\limits ^{n_{k}-1}H_{T_{*}^{j}\nu_{k}}((\alpha_{l})_{0}^{m-1})+2m\log d.
\end{align*}
Due to the concavity of $H_{\{\cdot{}\}}((\alpha_{l})_{0}^{m-1})$ over $\mathcal{M}(X),$
\begin{equation*} 
\begin{aligned}
\frac{1}{n_{k}}\mathop{\sum}\limits_ {j=0}\limits ^{n_{k}-1}H_{T_{*}^{j}\nu_{k}}((\alpha_{l})_{0}^{m-1})\leq H_{\mu_{k}}((\alpha_{l})_{0}^{m-1}).
\end{aligned}
\end{equation*}
We conclude that 
\begin{equation}\label{3.7}
\begin{aligned}
\frac{1}{n_k}H_{\nu_{k}}\left((\alpha_{l})_{0}^{n_{k}-1}\right)\leq \frac{1}{m}H_{\mu_{k}}\left((\alpha_{l})_{0}^{m-1}\right)+\frac{2m\log d}{n_k}.
\end{aligned}
\end{equation}
From \eqref{3.6} and \eqref{3.7},  we obtain
\[P(T, \mathcal{E}; \mathcal{U})-  \mathcal{E}(\mu_\infty)-3\gamma-\frac{\log{(2Nn_k)}}{n_k}\le \frac{1}{m}H_{\mu_{k}}\left((\alpha_{l})_{0}^{m-1}\right)+\frac{2m\log d}{n_k}. \]
Since $\alpha_l$ is clopen, taking   $k\to\infty$,  one has 
\[P(T, \mathcal{E}; \mathcal{U})-  \mathcal{E}(\mu_\infty)-3\gamma   \le \frac{1}{m}H_{\mu_{\infty}}\left((\alpha_{l})_{0}^{m-1}\right). \]
The result follows by letting $m\to\infty$.  

{\bf Step 2.} In the general case, if $(X,T)$ is not zero-dimensional. Recall that $(X,T)$  always has a  zero-dimensional extension (see \cite{F.B}). Let   $(Y,S)$  be  an invertible zero-dimensional  TDS  and  $\pi:Y\rightarrow X$  be the  factor map    from $(Y,S)$ to $(X,T).$  

 Since    $\mathcal{E}\colon \mathcal{M}(X)\to\mathbb{R}$ is an energy   and $\mathcal{U}\in \mathcal{C}_{X}^{o}$, one has  $\mathcal{E}\circ \pi_*: \mathcal{M}(Y)\to \mathbb{R}$ is an energy   and  $\pi ^{-1}\mathcal{U}\in \mathcal{C}_{Y}^{0}$. Then by the previous case, 
 	$$ P(S,\mathcal{E}\circ \pi_*;\pi^{-1}\mathcal{U})\leq \mathop{\sup}\limits _{\nu \in \mathcal{M}_S(Y)}\left\{h_{\nu}(S,\pi^{-1}\mathcal{U})+\mathcal{E}\circ \pi_*(\nu)\right\}.$$
 	Note that $\pi_*(\mathcal{M}_S(Y))\subset \mathcal{M}(X,T).$ By  Lemma \ref{lem 2.5} and \ref{lem 2.4}, 
 	\begin{align*}
 			P(T,\mathcal{E}; \mathcal{U})&= P(S,\mathcal{E}\circ \pi_*;\pi^{-1}\mathcal{U})\\
 			&\leq \mathop{\sup}\limits _{\nu \in \mathcal{M}_S(Y)}\left\{h_{\nu}(S,\pi^{-1}\mathcal{U})+\mathcal{E}\circ \pi_*(\nu)\right\}\\
 			&=  \mathop{\sup}\limits _{\nu \in \mathcal{M}_S(Y)}\left\{h_{\pi_*\nu}(T, \mathcal{U})+\mathcal{E}(\pi_*\nu)\right\}\\
 			&\le \mathop{\sup}\limits _{\mu \in \mathcal{M}(X,T)}\left\{h_{\mu}(T, \mathcal{U})+\mathcal{E}(\mu)\right\}.
 	\end{align*}
 The proof is complete. 
\end{proof}

%
%
%

\section{Relationship between nonlinear topological pressure and nonlinear local pressure}\label{S3}
In this section, we give several equivalent definitions of nonlinear topological pressure.

Let $(X,T)$ be  a TDS .  For $n\in \mathbb{N},\epsilon>0$ and $x\in X$, denote  
$$B_n(x,\varepsilon)=\{y\in X:d(T^{i}x,T^{i}y)<\epsilon,\forall 0\le i\le n-1\}.$$
  A set $E\subset X$ is said to be an  {\em $(n,\epsilon)\text{-}$separated } set of $X$ with respect to $T$,  if for any distinct $x,y\in E$, $y\notin B_n(x,\varepsilon)$.  A set $D\subset X$ is said to be an {\em $(n,\epsilon)\text{-}$spanning}  set of $X$ with respect to $T$, if for any $x\in X$, there exists $y\in D$ such that $y\in B_n(x,\varepsilon)$.

\begin{definition} \label{DP} Let $\mathcal{E}\colon \mathcal{M}(X)\to \mathbb{R}$ be an energy. For  $n\geq 1$ and $\epsilon>0$, let 
	$$P_{n}(T,\mathcal{E},\varepsilon)=\sup \left\{\mathop{\sum}\limits _{x\in E} e^{n\mathcal{E}(\triangle_{x}^{n})}:E \;is\; an\;(n,\varepsilon)\text{-} separated\;  set \;of\; X \right\}.$$ Then the \emph{upper and lower nonlinear topological pressures} are defined respectively by
			\[P(T,\mathcal{E})=\lim\limits _{\epsilon\rightarrow0}\limsup \limits _{n\rightarrow \infty}\frac{1}{n}\log P_{n}(T,\mathcal{E},\varepsilon),\]
			\[\underline{P}(T,\mathcal{E})=\lim\limits _{\epsilon\rightarrow0}\liminf \limits _{n\rightarrow \infty}\frac{1}{n}\log P_{n}(T,\mathcal{E},\varepsilon).\]	 
\end{definition}

\begin{definition}\label{DQ} Let $\mathcal{E}\colon \mathcal{M}(X)\to \mathbb{R}$ be an energy. For  $n\geq 1$ and $\epsilon>0$, let 
	$$Q_{n}(T,\mathcal{E},\varepsilon)=\inf \left\{\mathop{\sum}\limits _{x\in F} e^{n\mathcal{E}(\triangle_{x}^{n})}:F \;is\; an\;(n,\varepsilon)\text{-} spanning\;  set \;of\; X \right\}.$$ Then we define
	\[Q(T,\mathcal{E})=\lim\limits _{\epsilon\rightarrow0}\limsup \limits _{n\rightarrow \infty}\frac{1}{n}\log Q_{n}(T,\mathcal{E},\varepsilon),\]
	\[\underline{Q}(T,\mathcal{E})=\lim\limits _{\epsilon\rightarrow0}\liminf \limits _{n\rightarrow \infty}\frac{1}{n}\log Q_{n}(T,\mathcal{E},\varepsilon).\]	 
\end{definition}

We also give several distinct definitions of nonlinear local pressure.

\begin{definition}\label{Dp12}
Let $\mathcal{E}\colon \mathcal{M}(X)\to \mathbb{R}$ be an energy and $\mathcal{U}\in \mathcal{C}_X^o$. For $n\ge1,$ denote
	$$p^1_{n}(T,\mathcal{E};\mathcal{U})=\inf \left\{\mathop{\sum}\limits _{V\in \mathcal{V}}\mathop{\sup} \limits _{x\in V}e^{n\mathcal{E}(\triangle_{x}^{n})}:\mathcal{V}\in \mathcal{C}_{X} \; and \; \mathcal{V}\succeq\mathcal{U}_{0}^{n-1}\right\},$$
	$$p^2_{n}(T,\mathcal{E};\mathcal{U})=\inf \left\{\mathop{\sum}\limits _{V\in \mathcal{V}}\mathop{\inf} \limits _{x\in V}e^{n\mathcal{E}(\triangle_{x}^{n})}:\mathcal{V}\in \mathcal{C}_{X} \; and \; \mathcal{V}\succeq\mathcal{U}_{0}^{n-1}\right\}.$$	
	Then we define
	$${P}_1(T,\mathcal{E};\mathcal{U})={P}(T,\mathcal{E};\mathcal{U})=\limsup \limits _{n\rightarrow \infty}\frac{1}{n}\log p^1_{n}(T,\mathcal{E};\mathcal{U}),$$
	$$\underline{P}_1(T,\mathcal{E};\mathcal{U})=\underline{P}(T,\mathcal{E};\mathcal{U})=\liminf \limits _{n\rightarrow \infty}\frac{1}{n}\log p^1_{n}(T,\mathcal{E};\mathcal{U}),$$
	and 
	$${P}_2(T,\mathcal{E};\mathcal{U})=\limsup \limits _{n\rightarrow \infty}\frac{1}{n}\log p^2_{n}(T,\mathcal{E};\mathcal{U}),$$
	$$\underline{P}_2(T,\mathcal{E};\mathcal{U})=\liminf \limits _{n\rightarrow \infty}\frac{1}{n}\log p^2_{n}(T,\mathcal{E};\mathcal{U}).$$
\end{definition}

\begin{definition}\label{Dp34}
	Let $\mathcal{E}\colon \mathcal{M}(X)\to \mathbb{R}$ be an energy and $\mathcal{U}\in \mathcal{C}_X^o$. For $n\ge1,$ denote
	$$p^3_{n}(T,\mathcal{E};\mathcal{U})=\inf\left\{\mathop{\sum}\limits _{B\in \mathcal{\beta}}\mathop{\sup} \limits _{x\in B}e^{n\mathcal{E}(\triangle_{x}^{n})}: \beta\text{~is a finite subcover of~}\mathcal{U}_{0}^{n-1}\right\},$$
	$$p^4_{n}(T,\mathcal{E};\mathcal{U})=\inf \left\{\mathop{\sum}\limits _{B\in \mathcal{\beta}}\mathop{\inf} \limits _{x\in B}e^{n\mathcal{E}(\triangle_{x}^{n})}: \beta\text{~is a finite subcover of~}\mathcal{U}_{0}^{n-1}\right\}.$$
	Then we define
	$${P}_3(T,\mathcal{E};\mathcal{U}) =\limsup \limits _{n\rightarrow \infty}\frac{1}{n}\log p^3_{n}(T,\mathcal{E};\mathcal{U}),$$
	$$\underline{P}_3(T,\mathcal{E};\mathcal{U})=\liminf \limits _{n\rightarrow \infty}\frac{1}{n}\log p^3_{n}(T,\mathcal{E};\mathcal{U}),$$
	and 
	$${P}_4(T,\mathcal{E};\mathcal{U})=\limsup \limits _{n\rightarrow \infty}\frac{1}{n}\log p^4_{n}(T,\mathcal{E};\mathcal{U}),$$
	$$\underline{P}_4(T,\mathcal{E};\mathcal{U})=\liminf \limits _{n\rightarrow \infty}\frac{1}{n}\log p^4_{n}(T,\mathcal{E};\mathcal{U}).$$
\end{definition}
\begin{lemma}\label{lem4.5}
  Let $(X,T)$ be a {\rm TDS},  $\mathcal{E}\colon \mathcal{M}(X)\to \mathbb{R}$ be an energy and $\mathcal{U}\in \mathcal{C}_X^o$.  Then for any $n\ge1$, 
  \[p^2_{n}(T,\mathcal{E};\mathcal{U})\le p^4_{n}(T,\mathcal{E};\mathcal{U})\le p^3_{n}(T,\mathcal{E};\mathcal{U}),\]
    \[p^2_{n}(T,\mathcal{E};\mathcal{U})\le p^1_{n}(T,\mathcal{E};\mathcal{U})\le p^3_{n}(T,\mathcal{E};\mathcal{U}).\]
\end{lemma}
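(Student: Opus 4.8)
The plan is to derive all four inequalities from two elementary observations, with no deep input required. The first observation concerns the feasible sets: every finite subcover $\beta$ of $\mathcal{U}_{0}^{n-1}$ is, in particular, a cover $\mathcal{V}\in\mathcal{C}_{X}$ satisfying $\mathcal{V}\succeq\mathcal{U}_{0}^{n-1}$. Indeed, each element of $\beta$ belongs to $\mathcal{U}_{0}^{n-1}$ and is therefore contained in an element of $\mathcal{U}_{0}^{n-1}$ (namely itself), so $\beta\succeq\mathcal{U}_{0}^{n-1}$ by the definition of $\succeq$; moreover $\beta$ is a finite family of open (hence Borel) sets covering $X$, so $\beta\in\mathcal{C}_{X}$. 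Thus the collection of subcovers of $\mathcal{U}_{0}^{n-1}$ is contained in the collection of covers $\mathcal{V}\in\mathcal{C}_{X}$ with $\mathcal{V}\succeq\mathcal{U}_{0}^{n-1}$, which are the admissible families for $p^1_n$ and $p^2_n$.

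The second observation is that for any fixed $\mathcal{V}\in\mathcal{C}_{X}$, replacing $\sup_{x\in V}$ by $\inf_{x\in V}$ can only decrease each summand, since $e^{n\mathcal{E}(\triangle_{x}^{n})}>0$ everywhere; hence
$$\sum_{V\in\mathcal{V}}\inf_{x\in V}e^{n\mathcal{E}(\triangle_{x}^{n})}\ \le\ \sum_{V\in\mathcal{V}}\sup_{x\in V}e^{n\mathcal{E}(\triangle_{x}^{n})}.$$
This holds verbatim when $\mathcal{V}$ is replaced by a subcover $\beta$ of $\mathcal{U}_{0}^{n-1}$.

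With these in hand, each inequality follows from two monotonicity principles for the infimum: an infimum over a larger feasible set is no larger, and an infimum of a pointwise-smaller objective is no larger. Concretely, I would argue that $p^1_n\le p^3_n$ and $p^2_n\le p^4_n$ both follow from the first observation (identical objective, but the $p^3_n,p^4_n$ quantities range over the smaller feasible set of subcovers), while $p^2_n\le p^1_n$ (over the common feasible set of covers finer than $\mathcal{U}_{0}^{n-1}$) and $p^4_n\le p^3_n$ (over the common feasible set of subcovers) both follow from the second observation. Chaining these four comparisons yields $p^2_n\le p^1_n\le p^3_n$ and $p^2_n\le p^4_n\le p^3_n$, exactly as claimed. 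There is no genuine obstacle here; the only point that requires care is the verification in the first observation that a subcover qualifies as an admissible cover for $p^1_n$ and $p^2_n$, which hinges on the convention that $\beta\succeq\mathcal{U}_{0}^{n-1}$ holds whenever $\beta$ is a subcollection of $\mathcal{U}_{0}^{n-1}$.
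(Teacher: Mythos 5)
Your proposal is correct and follows essentially the same route as the paper: the inequalities $p^2_n\le p^4_n$ and $p^1_n\le p^3_n$ come from observing that every finite subcover of $\mathcal{U}_0^{n-1}$ is an admissible cover $\mathcal{V}\in\mathcal{C}_X$ with $\mathcal{V}\succeq\mathcal{U}_0^{n-1}$, while $p^2_n\le p^1_n$ and $p^4_n\le p^3_n$ come from $\inf\le\sup$ termwise. Your explicit verification that a subcover is finer than $\mathcal{U}_0^{n-1}$ (each element being contained in itself) is exactly the point the paper leaves implicit.
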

\begin{proof}
	Note that  if $\beta$ is a finite subcover of $\mathcal{U}_{0}^{n-1}$, then $\beta\in \mathcal{C}_{X}$ and $ \beta\succeq\mathcal{U}_{0}^{n-1}.$ This immediately implies $p^2_{n}(T,\mathcal{E};\mathcal{U})\le p^4_{n}(T,\mathcal{E};\mathcal{U})$ and  $p^1_{n}(T,\mathcal{E};\mathcal{U})\le p^3_{n}(T,\mathcal{E};\mathcal{U})$. The remaining inequalities   $p^4_{n}(T,\mathcal{E};\mathcal{U})\le p^3_{n}(T,\mathcal{E};\mathcal{U})$ and $p^2_{n}(T,\mathcal{E};\mathcal{U})\le p^1_{n}(T,\mathcal{E};\mathcal{U})$ follow directly from comparing the infimum/supremum operations in their definitions.
\end{proof}

\begin{lemma}\label{lem4.6}
	Let $(X,T)$ be a \textit{TDS}, $\mathcal{E}\colon \mathcal{M}(X) \to \mathbb{R}$ be an energy. For any $\varepsilon > 0$,  let $\mathcal{U} \in \mathcal{C}_X^o$ with $\mathrm{diam}(\mathcal{U}) \le \varepsilon$ and denote
	\[
	\tau_\varepsilon = \sup \bigl\{ |\mathcal{E}(\mu) - \mathcal{E}(\nu)| : \mu,\nu \in \mathcal{M}(X), \, W(\mu,\nu) \le \varepsilon \bigr\}.
	\]
	
	\begin{enumerate}
		\item If $\delta > 0$ is the Lebesgue number of $\mathcal{U}$, then for any $n\ge 1$,
		\[
		e^{-n\tau_\varepsilon} \cdot p^3_{n}(T,\mathcal{E};\mathcal{U}) \le Q_{n}(T,\mathcal{E},\delta/2);
		\]
		\item For any $n \ge 1$, one has 
		\[
		Q_{n}(T,\mathcal{E},\varepsilon) \le P_{n}(T,\mathcal{E},\varepsilon) \le e^{n\tau_\varepsilon} \cdot p^2_{n}(T,\mathcal{E};\mathcal{U}).
		\]
	\end{enumerate}
\end{lemma}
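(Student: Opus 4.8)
The plan rests on one recurring observation that links the dynamical geometry to the energy. If $x,y\in X$ satisfy $d(T^i x,T^i y)\le \varepsilon$ for all $0\le i\le n-1$, then for every $1$-Lipschitz $f$ one has $|\triangle_{x}^{n}(f)-\triangle_{y}^{n}(f)|=|\frac{1}{n}\sum_{i=0}^{n-1}(f(T^ix)-f(T^iy))|\le \varepsilon$, so $W(\triangle_{x}^{n},\triangle_{y}^{n})\le \varepsilon$ and hence $|\mathcal{E}(\triangle_{x}^{n})-\mathcal{E}(\triangle_{y}^{n})|\le \tau_\varepsilon$ by definition of $\tau_\varepsilon$. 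I would record this first and invoke it in both parts. Orbit-$\varepsilon$-closeness arises in exactly two ways: two points lying in a common element of $\mathcal{U}_{0}^{n-1}$ stay $\varepsilon$-close along the orbit because $\mathrm{diam}(\mathcal{U})\le \varepsilon$; and the points of a Bowen ball $B_n(y,\delta/2)$ stay $\delta/2$-close to $y$.

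For part (1) I would begin from an arbitrary $(n,\delta/2)$-spanning set $F$ and manufacture a finite subcover of $\mathcal{U}_{0}^{n-1}$. For each $y\in F$ and each $0\le i\le n-1$, the set $T^i(B_n(y,\delta/2))$ lies in the open ball of radius $\delta/2$ about $T^iy$, hence by the choice of the Lebesgue number $\delta$ it lies in some $U_{j_i(y)}\in\mathcal{U}$; consequently $B_n(y,\delta/2)\subset C_y:=\bigcap_{i=0}^{n-1}T^{-i}U_{j_i(y)}\in\mathcal{U}_{0}^{n-1}$. Since $F$ spans, $\beta:=\{C_y:y\in F\}$ is a finite subcover of $\mathcal{U}_{0}^{n-1}$. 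As $y\in C_y$ and any $x\in C_y$ satisfy $T^ix,T^iy\in U_{j_i(y)}$, the bound $\mathrm{diam}(\mathcal{U})\le\varepsilon$ gives $d(T^ix,T^iy)\le\varepsilon$, so the key observation yields $\sup_{x\in C_y}e^{n\mathcal{E}(\triangle_{x}^{n})}\le e^{n\tau_\varepsilon}e^{n\mathcal{E}(\triangle_{y}^{n})}$. Summing over $y\in F$ bounds $p^3_{n}(T,\mathcal{E};\mathcal{U})$ above by $e^{n\tau_\varepsilon}\sum_{y\in F}e^{n\mathcal{E}(\triangle_{y}^{n})}$, and taking the infimum over spanning sets $F$ gives the claim.

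For part (2), the inequality $Q_n\le P_n$ is the standard separated/spanning comparison: a maximal (under inclusion) $(n,\varepsilon)$-separated set $E$ is automatically $(n,\varepsilon)$-spanning, so $Q_{n}(T,\mathcal{E},\varepsilon)\le\sum_{x\in E}e^{n\mathcal{E}(\triangle_{x}^{n})}\le P_{n}(T,\mathcal{E},\varepsilon)$. For the right-hand inequality, fix any cover $\mathcal{V}\succeq\mathcal{U}_{0}^{n-1}$ and any $(n,\varepsilon)$-separated set $E$. Each $V\in\mathcal{V}$ sits inside some $C\in\mathcal{U}_{0}^{n-1}$, so any two points of $V$ stay $\varepsilon$-close along the orbit; from this I would deduce that $V$ contains at most one point of $E$, which produces an injection $\phi:E\to\mathcal{V}$ with $x\in\phi(x)$. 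Applying the key observation inside each $V=\phi(x)$ gives $e^{n\mathcal{E}(\triangle_{x}^{n})}\le e^{n\tau_\varepsilon}\inf_{x'\in V}e^{n\mathcal{E}(\triangle_{x'}^{n})}$; summing over $x\in E$ and using injectivity of $\phi$ together with nonnegativity of the summands bounds $\sum_{x\in E}e^{n\mathcal{E}(\triangle_{x}^{n})}$ by $e^{n\tau_\varepsilon}\sum_{V\in\mathcal{V}}\inf_{x'\in V}e^{n\mathcal{E}(\triangle_{x'}^{n})}$. Taking the supremum over $E$ and the infimum over $\mathcal{V}$ finishes.

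The step I expect to need the most care is the injectivity claim in part (2): deducing that two distinct points of an $(n,\varepsilon)$-separated set cannot share an element of $\mathcal{U}_{0}^{n-1}$. This hinges on the interplay between the non-strict bound $\mathrm{diam}(\mathcal{U})\le\varepsilon$ and the strict inequality in the definition of $B_n(x,\varepsilon)$, and I would want to check that the conventions in force (or a harmless passage through a slightly smaller radius) make orbit-$\varepsilon$-closeness genuinely incompatible with being $(n,\varepsilon)$-separated. The diameter- and Lebesgue-number bookkeeping in part (1) is routine once the covering $\beta$ is set up correctly.
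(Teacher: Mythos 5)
Your proof is correct and essentially identical to the paper's: the same Wasserstein estimate $W(\triangle_{x}^{n},\triangle_{y}^{n})\le\varepsilon$ for orbit-$\varepsilon$-close points, the same Lebesgue-number construction of a subcover from a spanning set in (1) (the paper routes this through the intermediate bounds $p^3_{n}\le e^{n\tau_\varepsilon}p^4_{n}\le\sum_{y\in F}e^{n\mathcal{E}(\triangle_{y}^{n})}$, which you merge into one step), and the same maximal-separated-set and one-point-per-element-of-$\mathcal{U}_{0}^{n-1}$ argument in (2). The strict-versus-non-strict boundary issue you flag in (2) is genuine under the paper's conventions ($\mathrm{diam}(\mathcal{U})\le\varepsilon$ while separated only forces $d(T^{i}x,T^{i}y)\ge\varepsilon$ for some $i$), but the paper's own proof makes the same unqualified assertion, and the point is harmless for the applications since Theorem~\ref{Thm4.7} only uses covers of arbitrarily small diameter, so one may pass to $\mathrm{diam}(\mathcal{U})<\varepsilon$.
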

\begin{proof}
	\textup{(\romannumeral1)} 
	Let $\mathcal{U} \in \mathcal{C}_X^o$ with $\mathrm{diam}(\mathcal{U}) \le \varepsilon$. For any $B \in \mathcal{U}_{0}^{n-1}$ and any $x, y \in B$, we estimate the Wasserstein distance between empirical measures:
	\begin{align*}
		W(\triangle_{x}^{n}, \triangle_{y}^{n}) 
		&= \sup \left\{ \triangle_{x}^{n}(g) - \triangle_{y}^{n}(g) : ~g \colon X \to \mathbb{R} \text{ is } 1\text{-Lipschitz} \right\} \\
		&= \sup \left\{ \sum_{i=0}^{n-1} \frac{g(T^{i}x) - g(T^{i}y)}{n} : ~g \colon X \to \mathbb{R} \text{ is } 1\text{-Lipschitz} \right\} \\
		&\leq \sum_{i=0}^{n-1} \frac{d(T^{i}x, T^{i}y)}{n} \\
		&\le \varepsilon.
	\end{align*}
	Therefore, by the definition of $p^i_{n}(T,\mathcal{E};\mathcal{U})$ and $\tau_\varepsilon$,
	\begin{equation}\label{4.1}
		p^1_{n}(T,\mathcal{E};\mathcal{U}) \le e^{n\tau_\varepsilon} \cdot p^2_{n}(T,\mathcal{E};\mathcal{U}),
	\end{equation}
	and 
	\begin{equation}\label{4.2}
		p^3_{n}(T,\mathcal{E};\mathcal{U}) \le e^{n\tau_\varepsilon} \cdot p^4_{n}(T,\mathcal{E};\mathcal{U}).
	\end{equation}
	
	Now let $F$ be an $(n, \delta/2)$-spanning set of $X$. Then
	\[
	X = \bigcup_{x \in F} B_n(x, \delta/2).
	\]
	Since $\delta$ is the Lebesgue number of $\mathcal{U}$,  each $B(T^{i}x, \delta/2)$ is contained in some element of $\mathcal{U}$. Thus 
$B_n(x, \delta/2) = \bigcap_{i=0}^{n-1} T^{-i} B(T^{i}x, \delta/2)$
	is contained in some element of $\mathcal{U}_{0}^{n-1}$. It follows that 
\begin{equation}\label{4.2.1}
		p^4_{n}(T,\mathcal{E};\mathcal{U}) \le \sum_{x \in F} e^{n\mathcal{E}(\triangle_{x}^{n})}.
\end{equation} 
	Combining with \eqref{4.2}, we obtain
	\[
	e^{-n\tau_\varepsilon} \cdot p^3_{n}(T,\mathcal{E};\mathcal{U}) \le Q_{n}(T,\mathcal{E},\delta/2).
	\]
	
	\textup{(\romannumeral2)} 
	Let $E$ be an $(n,\varepsilon)$-separated set of $X$ with maximum cardinality. Then $E$ is also   $(n,\varepsilon)$-spanning, giving  
	\[
	Q_{n}(T,\mathcal{E},\varepsilon) \le P_{n}(T,\mathcal{E},\varepsilon).
	\]	
	For $\mathcal{U} \in \mathcal{C}_X^o$ with $\mathrm{diam}(\mathcal{U}) \le \varepsilon$, each element of $\mathcal{U}_{0}^{n-1}$ contains at most one point in $E$. Hence
	\[
	\sum_{x \in E} e^{n\mathcal{E}(\triangle_{x}^{n})} \le p^1_{n}(T,\mathcal{E};\mathcal{U}).
	\]
	Applying  \eqref{4.1}  completes the proof: 
	\[
	P_{n}(T,\mathcal{E},\varepsilon) \le e^{n\tau_\varepsilon} \cdot p^2_{n}(T,\mathcal{E};\mathcal{U}).
	\]
\end{proof}

\begin{theorem}\label{Thm4.7}
Let $(X,T)$ be a {\rm TDS},  $\mathcal{E}\colon \mathcal{M}(X)\to \mathbb{R}$ be an energy.  Then the following quantities all coincide with $P(T,\mathcal{E}).$
\begin{enumerate}
    \item  ${Q}(T,\mathcal{E});$
    \item $\lim\limits _{\varepsilon\rightarrow0}\sup\limits_{\mathcal{U}\in \mathcal{C}_{X}^{o}}\big \{ P_i(T,\mathcal{E};\mathcal{U}):{\rm diam} (\mathcal{U})\leq \varepsilon \big \}$,\quad $\forall~ i\in \{1,2,3,4\};$
     \item $\lim\limits _{k\rightarrow\infty}P_i(T,\mathcal{E};\mathcal{U}_k)$,  \quad$\forall$ $\{\mathcal{U}_k\}\subset \mathcal{C}_{X}^{o}$ with ${\rm diam} (\mathcal{U}_k)\to0$, $\forall i\in \{1,2,3,4\};$
    \item $\sup\limits_{\mathcal{U}\in \mathcal{C}_{X}^{o}}P_i(T,\mathcal{E};\mathcal{U})$,\quad $\forall~ i\in \{1,2,4\}.$ 
\end{enumerate}
\end{theorem}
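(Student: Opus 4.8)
The plan is to push everything through the two comparison inequalities of Lemma \ref{lem4.6}, combined with the pointwise orderings $p^2_n\le p^1_n\le p^3_n$ and $p^2_n\le p^4_n\le p^3_n$ from Lemma \ref{lem4.5}, exploiting that $\tau_\varepsilon\to 0$ as $\varepsilon\to 0$ by uniform continuity of $\mathcal{E}$ on the compact space $\mathcal{M}(X)$. Throughout I abbreviate $p(\varepsilon)=\limsup_n\frac{1}{n}\log P_n(T,\mathcal{E},\varepsilon)$ and $q(\varepsilon)=\limsup_n\frac{1}{n}\log Q_n(T,\mathcal{E},\varepsilon)$. Since separating (resp. spanning) is harder (resp. easier) at smaller scale, both $p$ and $q$ are non-increasing in $\varepsilon$, so $q(\varepsilon)\le Q(T,\mathcal{E})$ and $p(\varepsilon)\le P(T,\mathcal{E})$ for every $\varepsilon$, with $p(\varepsilon)\to P(T,\mathcal{E})$ and $q(\varepsilon)\to Q(T,\mathcal{E})$ as $\varepsilon\to 0$.

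First I would prove (1). The inequality $Q(T,\mathcal{E})\le P(T,\mathcal{E})$ is immediate from $Q_n\le P_n$ in Lemma \ref{lem4.6}(\romannumeral2). For the reverse, fix $\varepsilon>0$, choose $\mathcal{U}$ with $\mathrm{diam}(\mathcal{U})\le\varepsilon$ and Lebesgue number $\delta$, and chain the two parts of Lemma \ref{lem4.6} through Lemma \ref{lem4.5}: $P_n(T,\mathcal{E},\varepsilon)\le e^{n\tau_\varepsilon}p^2_n\le e^{n\tau_\varepsilon}p^3_n\le e^{2n\tau_\varepsilon}Q_n(T,\mathcal{E},\delta/2)$. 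Taking $\frac{1}{n}\log$ and $\limsup_n$ gives $p(\varepsilon)\le 2\tau_\varepsilon+q(\delta/2)\le 2\tau_\varepsilon+Q(T,\mathcal{E})$, and letting $\varepsilon\to 0$ yields $P(T,\mathcal{E})\le Q(T,\mathcal{E})$, hence equality.

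Next, (2) and (3) both reduce to two uniform estimates valid for every $\mathcal{U}$ with $\mathrm{diam}(\mathcal{U})\le\varepsilon$. From Lemma \ref{lem4.6}(\romannumeral1) with $p^i_n\le p^3_n$ and part (1) I obtain the upper estimate $P_i(T,\mathcal{E};\mathcal{U})\le P_3(T,\mathcal{E};\mathcal{U})\le\tau_\varepsilon+Q(T,\mathcal{E})=\tau_\varepsilon+P(T,\mathcal{E})$ for all $i\in\{1,2,3,4\}$; from Lemma \ref{lem4.6}(\romannumeral2) with $p^2_n\le p^i_n$ I obtain the lower estimate $P_i(T,\mathcal{E};\mathcal{U})\ge P_2(T,\mathcal{E};\mathcal{U})\ge p(\varepsilon)-\tau_\varepsilon$. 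Taking the supremum over such $\mathcal{U}$ for (2), or evaluating along a sequence $\mathcal{U}_k$ with $\mathrm{diam}(\mathcal{U}_k)\to 0$ for (3), and letting $\varepsilon\to 0$, squeezes both quantities to $P(T,\mathcal{E})$, using $p(\varepsilon)\to P(T,\mathcal{E})$ and $\tau_\varepsilon\to 0$.

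Finally, for (4) the lower bound $\sup_{\mathcal{U}}P_i(T,\mathcal{E};\mathcal{U})\ge P(T,\mathcal{E})$ is free from (2), since the supremum over all covers dominates the supremum over fine covers. The restriction $i\in\{1,2,4\}$ enters only in the upper bound, which I would get from monotonicity under refinement: if $\mathcal{U}'\succeq\mathcal{U}$ then $P_i(T,\mathcal{E};\mathcal{U})\le P_i(T,\mathcal{E};\mathcal{U}')$, and choosing refinements $\mathcal{U}'$ of arbitrarily small diameter and applying the upper estimate of (2) gives $P_i(T,\mathcal{E};\mathcal{U})\le\tau_\varepsilon+P(T,\mathcal{E})\to P(T,\mathcal{E})$. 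For $i=1,2$ monotonicity is immediate because $\{\mathcal{V}:\mathcal{V}\succeq(\mathcal{U}')_0^{n-1}\}\subseteq\{\mathcal{V}:\mathcal{V}\succeq\mathcal{U}_0^{n-1}\}$, so the defining infimum for $\mathcal{U}'$ runs over a smaller family. For $i=4$ it takes a short construction: given a finite subcover $\beta'$ of $(\mathcal{U}')_0^{n-1}$, assign to each $B'\in\beta'$ an atom $\phi(B')$ of $\mathcal{U}_0^{n-1}$ with $B'\subseteq\phi(B')$; then $\beta=\{\phi(B'):B'\in\beta'\}$ is a subcover of $\mathcal{U}_0^{n-1}$ and, since $\inf_{x\in\phi(B')}e^{n\mathcal{E}(\triangle_x^n)}\le\inf_{x\in B'}e^{n\mathcal{E}(\triangle_x^n)}$ and distinct atoms are counted once, $\sum_{B\in\beta}\inf_{x\in B}e^{n\mathcal{E}(\triangle_x^n)}\le\sum_{B'\in\beta'}\inf_{x\in B'}e^{n\mathcal{E}(\triangle_x^n)}$, giving $p^4_n(T,\mathcal{E};\mathcal{U})\le p^4_n(T,\mathcal{E};\mathcal{U}')$. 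I expect this $i=4$ monotonicity to be the main obstacle, and it also explains the omission of $i=3$: the same substitution for $p^3_n$ replaces $\inf$ by $\sup$ and reverses the inequality, so $P_3$ need not be monotone and $\sup_{\mathcal{U}}P_3(T,\mathcal{E};\mathcal{U})$ cannot be controlled this way.
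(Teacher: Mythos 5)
Your argument is correct, and for parts (1)--(3) it is essentially the paper's proof: chain Lemma \ref{lem4.6} through the orderings of Lemma \ref{lem4.5} and let $\tau_\varepsilon\to 0$. The genuine divergence is in part (4). The paper obtains the upper bound $P_1(T,\mathcal{E};\mathcal{U})\le P(T,\mathcal{E})$ for an arbitrary fixed $\mathcal{U}$ by a greedy construction: it builds, for each $n$, an $(n,\delta/2)$-separated set $E=\{x_1,\dots,x_m\}$ together with a disjointified cover $\mathcal{V}\succeq\mathcal{U}_0^{n-1}$ for which $\sum_{V\in\mathcal{V}}\sup_{x\in V}e^{n\mathcal{E}(\triangle_x^n)}=\sum_{x\in E}e^{n\mathcal{E}(\triangle_x^n)}$, whence $p^1_n(T,\mathcal{E};\mathcal{U})\le P_n(T,\mathcal{E},\delta/2)$ directly (and it treats $i=2,4$ separately via the spanning-set bound \eqref{4.2.1}). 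You instead prove monotonicity under refinement, $P_i(T,\mathcal{E};\mathcal{U})\le P_i(T,\mathcal{E};\mathcal{U}')$ for $\mathcal{U}'\succeq\mathcal{U}$ and $i\in\{1,2,4\}$, pass to a common refinement $\mathcal{U}'=\mathcal{U}\vee\mathcal{U}''$ of small diameter, and invoke the already-established fine-cover bound from part (2). Your monotonicity for $i=1,2$ is immediate from $(\mathcal{U}')_0^{n-1}\succeq\mathcal{U}_0^{n-1}$ shrinking the admissible family of $\mathcal{V}$, and your map $\phi$ sending each element of a subcover of $(\mathcal{U}')_0^{n-1}$ into a containing element of $\mathcal{U}_0^{n-1}$ handles $i=4$ correctly (the infimum can only drop and repeated images are counted once, with all terms positive). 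This route is more uniform and avoids the paper's separated-set construction entirely; what the paper's construction buys in exchange is the explicit identity $p^1_n(T,\mathcal{E};\mathcal{U})\le P_n(T,\mathcal{E},\delta/2)$ at every fixed scale $n$ and cover $\mathcal{U}$, which is slightly more quantitative. Your closing observation that the $\phi$-substitution reverses the inequality for $p^3_n$ correctly identifies why $i=3$ is excluded, and is confirmed by the paper's counterexample following the theorem.
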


\begin{proof}
	$\text{(\romannumeral1)}$ \& $\text{(\romannumeral2)}$ Let $\varepsilon>0$ and $\mathcal{U} \in \mathcal{C}_X^o$ with $\mathrm{diam}(\mathcal{U}) \le \varepsilon$.  Let $\delta>0$ be the  Lebesgue number of $\mathcal{U}$. Then by    Lemma \ref{lem4.6}$\text{(\romannumeral1)}$, 
		\[	e^{-n\tau_\varepsilon} \cdot p^3_{n}(T,\mathcal{E};\mathcal{U}) \le Q_{n}(T,\mathcal{E},\delta/2), \quad \forall n \ge 1.	\]
Consequently,  for any  $\mathcal{U} \in \mathcal{C}_X^o$ with $\mathrm{diam}(\mathcal{U}) \le \varepsilon$,
\[-\tau_\varepsilon+ P_3(T,\mathcal{E};\mathcal{U}) \le \limsup\limits_{n\to\infty}\frac{1}{n}\log Q_{n}(T,\mathcal{E},\delta/2)\le Q(T,\mathcal{E}).\]
 Since $\tau_\varepsilon\to 0$ as $\varepsilon\to 0$,  we have
 \begin{equation}\label{4.3}
 \lim\limits _{\varepsilon\rightarrow0}\sup\limits_{\mathcal{U}\in \mathcal{C}_{X}^{o}}\big \{ P_3(T,\mathcal{E};\mathcal{U}):{\rm diam} (\mathcal{U})\leq \varepsilon \big \} \le Q(T,\mathcal{E}).
 \end{equation}
Similarly, by    Lemma \ref{lem4.6}$\text{(\romannumeral2)}$,  we may also obtain
\begin{equation}\label{4.4}
  Q(T,\mathcal{E})\le P(T,\mathcal{E})\le	 \lim\limits _{\varepsilon\rightarrow0}\sup\limits_{\mathcal{U}\in \mathcal{C}_{X}^{o}}\big \{ P_2(T,\mathcal{E};\mathcal{U}):{\rm diam} (\mathcal{U})\leq \varepsilon \big \}.
\end{equation}
By Lemma \ref{lem4.5},
\begin{equation}\label{4.5}
	P_2(T,\mathcal{E};\mathcal{U})\le P_4(T,\mathcal{E};\mathcal{U})\le P_3(T,\mathcal{E};\mathcal{U}),
\end{equation}
and
\begin{equation}\label{4.6}
	P_2(T,\mathcal{E};\mathcal{U})\le P_1(T,\mathcal{E};\mathcal{U})\le P_3(T,\mathcal{E};\mathcal{U}).
\end{equation}
Combining \eqref{4.3}-\eqref{4.6}, we conclude   
 \[P(T,\mathcal{E})= Q(T,\mathcal{E})=\lim\limits _{\varepsilon\rightarrow0}\sup\limits_{\mathcal{U}\in \mathcal{C}_{X}^{o}}\big \{ P_i(T,\mathcal{E};\mathcal{U}):{\rm diam} (\mathcal{U})\leq \varepsilon \big \},\quad \forall~ i\in \{1,2,3,4\}.\]
 The same argument proves \textup{(\romannumeral3)}.  
 
 \textup{(\romannumeral4)}  
 For any $\mathcal{U}\in \mathcal{C}_{X}^{o},$ suppose that  $\delta$ is the Lebesgue number of $\mathcal{U}$.  Let $F$ be an $(n, \delta/2)$-spanning set of $X$. Then by \eqref{4.2.1} and Lemma \ref{lem4.5},
 \[p^2_{n}(T,\mathcal{E};\mathcal{U}) \le p^4_{n}(T,\mathcal{E};\mathcal{U}) \le Q_{n}(T,\mathcal{E},\delta/2)\le P(T,\mathcal{E}).\]
 Hence,  
 \[\sup\limits_{\mathcal{U}\in \mathcal{C}_{X}^{o}}P_2(T,\mathcal{E};\mathcal{U})\le \sup\limits_{\mathcal{U}\in \mathcal{C}_{X}^{o}}P_4(T,\mathcal{E};\mathcal{U})\le P(T,\mathcal{E}). \]
 From part \textup{(\romannumeral2)},  for $i\in\{1,2\}$,
 $$P(T,\mathcal{E})=\lim\limits _{\varepsilon\rightarrow0}\sup\limits_{\mathcal{U}\in \mathcal{C}_{X}^{o}}\big \{ P_i(T,\mathcal{E};\mathcal{U}):{\rm diam} (\mathcal{U})\leq \varepsilon \big \}\le \sup\limits_{\mathcal{U}\in \mathcal{C}_{X}^{o}}P_i(T,\mathcal{E};\mathcal{U}).$$
 Therefore, 
 \[ \sup\limits_{\mathcal{U}\in \mathcal{C}_{X}^{o}}P_2(T,\mathcal{E};\mathcal{U})= \sup\limits_{\mathcal{U}\in \mathcal{C}_{X}^{o}}P_4(T,\mathcal{E};\mathcal{U})= P(T,\mathcal{E})\le \sup\limits_{\mathcal{U}\in \mathcal{C}_{X}^{o}}P_1(T,\mathcal{E};\mathcal{U}). \]
 It's left to prove 
 \[ P(T,\mathcal{E})\ge \sup\limits_{\mathcal{U}\in \mathcal{C}_{X}^{o}}P_1(T,\mathcal{E};\mathcal{U}). \]
 
  Fix an open cover $\mathcal{U} \in \mathcal{C}_X^o$ with Lebesgue number $\delta$, and let $n \geq 1$ be an arbitrary integer. Choose $x_1\in X$ such that $e^{n  \mathcal{E}(\triangle_{x_1}^{n})}=\sup_{x\in X}e^{n  \mathcal{E}(\triangle_{x}^{n})}$ and select $U_{i_1}\in \mathcal{U}_0^{n-1}$ satisfying $B_n(x_1,\delta/2)\subset U_{i_1}$. Denote $V_1=U_{i_1}$. Next, pick $x_2\in X\setminus V_1$ such that 
   $$e^{n  \mathcal{E}(\triangle_{x_2}^{n})}=\sup_{x\in X\setminus V_1}e^{n  \mathcal{E}(\triangle_{x}^{n})},$$  
   and take $U_{i_2}\in \mathcal{U}_0^{n-1}$ with $B_n(x_2,\delta/2)\subset U_{i_2}$. Let $V_2=U_{i_2}\setminus U_{i_1}$. Since $X$ is compact,   we can repeat this procedure inductively to obtain a finite set $E=\{x_1,\dots,x_m\}$, a collection  $\{U_{i_k}\}_{k=1}^m\subset \mathcal{U}_0^{n-1}$ and a disjoint cover $\mathcal{V}=\{V_1,\dots, V_m\}$ such that  for every $1\le k\le m,$
   \begin{enumerate} 
   	\item $B_n(x_k,\delta/2)\subset U_{i_k}$, $x_k\in V_k=U_{i_k}\setminus \cup_{j=1}^{k-1}U_{i_j}$,  and  $X=\cup_{k=1}^mV_k;$
   	\item $e^{n  \mathcal{E}(\triangle_{x_k}^{n})}=\sup\limits_{x\in X\setminus \cup_{j=1}^{k-1}V_j}e^{n  \mathcal{E}(\triangle_{x}^{n})}.$
   \end{enumerate}
   By construction, $E$ is an $(n,\delta/2)$-separated set of $X$, $\mathcal{V}\in\mathcal{C}_X$ and $\mathcal{V}\succeq\mathcal{U}_{0}^{n-1}$. Moreover, 
   \begin{align*}
   	\mathop{\sum}\limits _{V\in \mathcal{V}}\mathop{\sup} \limits _{x\in V}e^{n\mathcal{E}(\triangle_{x}^{n})}=\sum_{k=1}^{m}e^{n\mathcal{E}(\triangle_{x_k}^{n})}=\sum_{x\in E}e^{n\mathcal{E}(\triangle_{x}^{n})}.
   \end{align*}
   It follows that 
   \[p^1_{n}(T,\mathcal{E};\mathcal{U}) \le P_{n}(T,\mathcal{E},\delta/2)\le P(T,\mathcal{E}).\]
   Therefore,
   \[  P_1(T,\mathcal{E};\mathcal{U})\le P(T,\mathcal{E}),\quad \forall \mathcal{U}\in \mathcal{C}_{X}^{o}, \]
    which completes the proof. 
\end{proof}

\begin{remark}
	There exist cases where $\sup_{\mathcal{U}\in \mathcal{C}_{X}^{o}}P_3(T,\mathcal{E};\mathcal{U})>P(T,\mathcal{E}).$ As an example, consider the space $X=\{p\}\sqcup\Sigma_2$, where $\Sigma_2=\{0,1\}^\mathbb{Z}$ is the full two-sided shift space. Let $T:X\to X$ be defined by $T(p)=p$, and $T|_{\Sigma_2}$ acts as the left shift map.   Let $f:X\to \mathbb{R}$ be a function defined by $f(p)=10$ and $f|_{\Sigma_2}\equiv0$. The energy $\mathcal{E}$ is defined by $\mathcal{E}(\mu)=\int f d\mu$. Then 
	\begin{align*}
		 P(T,\mathcal{E})&=\mathop{\sup}\limits_{\mu \in \mathcal{M}^e(X,T)}\left\{h_{\mu}(T)+\int f d\mu\right\}\\
		                               &=\max\left\{h_{\mu_p}(T)+\int f d\mu_p, \mathop{\sup}\limits_{\mu \in \mathcal{M}^e(\Sigma_2,T)}\left\{h_{\mu}(T)+\int f d\mu\right\}\right\}\\
		                               &=10.
	\end{align*}
However, if we choose  $\mathcal{U}=\{\{p\}\cup [0], \{p\}\cup [1]\}$, where  $[0],[1]$ are cylinder sets in $\Sigma_2$,  then 
\[p^3_{n}(T,\mathcal{E};\mathcal{U})=\mathop{\sum}\limits _{B\in \mathcal{U}_{0}^{n-1}}\mathop{\sup} \limits _{x\in B}e^{n\mathcal{E}(\triangle_{x}^{n})}=2^n\cdot e^{n\cdot f(p)} .\]
As a consequence, $$P_3(T,\mathcal{E};\mathcal{U})=\log 2+10>P(T,\mathcal{E}).$$
\end{remark}

By replacing all instances of $\limsup$ with $\liminf$ in the proof of Theorem~\ref{Thm4.7}, we immediately obtain the following analogous result. As the argument requires no substantive changes beyond this substitution, we state the result without proof.
\begin{theorem}\label{Thm4.8}
	Let $(X,T)$ be a {\rm TDS},  $\mathcal{E}\colon \mathcal{M}(X)\to \mathbb{R}$ be an energy.  Then each of the following equals $\underline{P}(T,\mathcal{E}).$
	\begin{enumerate}
		\item  $\underline{Q}(T,\mathcal{E});$
		\item $\lim\limits _{\delta\rightarrow0}\sup\limits_{\mathcal{U}\in \mathcal{C}_{X}^{o}}\big \{ \underline{P}_i(T,\mathcal{E};\mathcal{U}):{\rm diam} (\mathcal{U})\leq \delta\big \}$,\quad $\forall~ i\in \{1,2,3,4\};$
		\item $\lim\limits _{k\rightarrow\infty}\underline{P}_i(T,\mathcal{E};\mathcal{U}_k)$,  \quad$\forall$ $\{\mathcal{U}_k\}\subset \mathcal{C}_{X}^{o}$ with ${\rm diam} (\mathcal{U}_k)\to0$, $\forall i\in \{1,2,3,4\};$
		\item $\sup\limits_{\mathcal{U}\in \mathcal{C}_{X}^{o}}\underline{P}_i(T,\mathcal{E};\mathcal{U})$,\quad $\forall~ i\in \{1,2,4\}.$ 
		
	\end{enumerate}
\end{theorem}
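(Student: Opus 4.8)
The plan is to mirror the proof of Theorem \ref{Thm4.7} line for line, replacing every occurrence of $\limsup_{n\to\infty}\frac1n\log(\cdot)$ by $\liminf_{n\to\infty}\frac1n\log(\cdot)$. The reason this substitution is legitimate is that the two auxiliary lemmas on which the whole argument rests, namely Lemma \ref{lem4.5} and Lemma \ref{lem4.6}, record inequalities that hold for \emph{every fixed} $n\ge 1$ and make no reference to any limiting procedure. Since $t\mapsto\log t$ and the functional $\liminf$ are both monotone, any finite-$n$ inequality $a_n\le b_n$ yields $\liminf_n\frac1n\log a_n\le\liminf_n\frac1n\log b_n$, exactly as it yields the corresponding relation for $\limsup$. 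Hence the chain of estimates transplants without modification.

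Concretely, for parts (i) and (ii) I would first fix $\varepsilon>0$ and $\mathcal{U}\in\mathcal{C}_X^o$ with $\mathrm{diam}(\mathcal{U})\le\varepsilon$ and Lebesgue number $\delta$. Applying $\liminf_n\frac1n\log(\cdot)$ to the inequality $e^{-n\tau_\varepsilon}p^3_n(T,\mathcal{E};\mathcal{U})\le Q_n(T,\mathcal{E},\delta/2)$ of Lemma \ref{lem4.6}(i) gives $-\tau_\varepsilon+\underline{P}_3(T,\mathcal{E};\mathcal{U})\le\liminf_n\frac1n\log Q_n(T,\mathcal{E},\delta/2)\le\underline{Q}(T,\mathcal{E})$, where the last step uses that $\epsilon\mapsto Q_n(T,\mathcal{E},\epsilon)$ is nonincreasing, so the quantity at the fixed scale $\delta/2$ is dominated by its limit $\underline{Q}(T,\mathcal{E})$ as the scale tends to $0$. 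Letting $\varepsilon\to0$ (so $\tau_\varepsilon\to0$) yields $\lim_{\varepsilon\to0}\sup\{\underline{P}_3(T,\mathcal{E};\mathcal{U}):\mathrm{diam}(\mathcal{U})\le\varepsilon\}\le\underline{Q}(T,\mathcal{E})$. Symmetrically, Lemma \ref{lem4.6}(ii) gives $\underline{Q}(T,\mathcal{E})\le\underline{P}(T,\mathcal{E})\le\lim_{\varepsilon\to0}\sup\{\underline{P}_2(T,\mathcal{E};\mathcal{U}):\mathrm{diam}(\mathcal{U})\le\varepsilon\}$, and the monotone relations $\underline{P}_2\le\underline{P}_4\le\underline{P}_3$ and $\underline{P}_2\le\underline{P}_1\le\underline{P}_3$ obtained by taking $\liminf$ in Lemma \ref{lem4.5} close the loop, forcing all four quantities in (ii) to equal $\underline{P}(T,\mathcal{E})=\underline{Q}(T,\mathcal{E})$. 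Part (iii) is the identical argument run along a fixed sequence $\mathcal{U}_k$ with $\mathrm{diam}(\mathcal{U}_k)\to0$.

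For part (iv) I would reuse the same finite-$n$ inequalities. Taking $\liminf$ in $p^2_n\le p^4_n\le Q_n(T,\mathcal{E},\delta/2)$ and invoking the scale-monotonicity of $Q_n$ gives $\sup_{\mathcal{U}}\underline{P}_2\le\sup_{\mathcal{U}}\underline{P}_4\le\underline{Q}(T,\mathcal{E})=\underline{P}(T,\mathcal{E})$, while part (ii) supplies the reverse bound $\underline{P}(T,\mathcal{E})\le\sup_{\mathcal{U}}\underline{P}_i$ for $i\in\{1,2\}$; together these yield equality for $i\in\{2,4\}$ and $\underline{P}\le\sup_{\mathcal{U}}\underline{P}_1$. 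The only nonformal ingredient is the remaining bound $\sup_{\mathcal{U}}\underline{P}_1\le\underline{P}(T,\mathcal{E})$, for which I would rerun the greedy selection in the proof of Theorem \ref{Thm4.7}(iv): for each $\mathcal{U}$ with Lebesgue number $\delta$ and each $n$, construct the $(n,\delta/2)$-separated set $E$ and the disjoint refinement $\mathcal{V}\succeq\mathcal{U}_0^{n-1}$ with $\sum_{V\in\mathcal{V}}\sup_{x\in V}e^{n\mathcal{E}(\triangle_x^n)}=\sum_{x\in E}e^{n\mathcal{E}(\triangle_x^n)}$, so that $p^1_n(T,\mathcal{E};\mathcal{U})\le P_n(T,\mathcal{E},\delta/2)$; taking $\liminf$ and using the scale-monotonicity of $P_n$ gives $\underline{P}_1(T,\mathcal{E};\mathcal{U})\le\underline{P}(T,\mathcal{E})$. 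I expect the only genuine subtlety, and hence the main point to verify, to be precisely these scale-monotonicity steps: one must confirm $\liminf_n\frac1n\log Q_n(T,\mathcal{E},\delta/2)\le\underline{Q}(T,\mathcal{E})$ and $\liminf_n\frac1n\log P_n(T,\mathcal{E},\delta/2)\le\underline{P}(T,\mathcal{E})$, which hold because $Q_n(T,\mathcal{E},\cdot)$ and $P_n(T,\mathcal{E},\cdot)$ are nonincreasing in the spanning/separation scale and the limits defining $\underline{Q}$ and $\underline{P}$ in Definitions \ref{DP} and \ref{DQ} are correspondingly increasing as the scale shrinks. Everything else is purely formal, which is why the remark preceding the statement is justified in omitting the full proof.
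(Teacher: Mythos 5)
Your proposal is correct and coincides with the paper's own treatment: the paper states Theorem \ref{Thm4.8} without proof, remarking only that one replaces every $\limsup$ in the proof of Theorem \ref{Thm4.7} by $\liminf$, which is exactly your strategy. Your additional checks --- that Lemmas \ref{lem4.5} and \ref{lem4.6} are finite-$n$ inequalities unaffected by the choice of limit, and that $P_n(T,\mathcal{E},\cdot)$ and $Q_n(T,\mathcal{E},\cdot)$ are nonincreasing in the scale so the fixed-scale $\liminf$ is dominated by $\underline{P}(T,\mathcal{E})$ and $\underline{Q}(T,\mathcal{E})$ --- are precisely the points that make the substitution legitimate.
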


\bibliographystyle{amsplain}

\end{document}